\newdimen\figrasterwd
\newcommand{\R}{\mathbb{R}}
\newcommand{\N}{\mathbb{N}}
\newcommand{\tT}{\mathrm{T}}
\DeclareMathOperator*{\argmin}{arg\,min}
\DeclareMathOperator*{\argmax}{arg\,max}
\DeclareMathOperator{\midr}{midr}
\DeclareMathOperator{\Midr}{Midr}
\newcommand{\Knoten}{\mathcal{H}}
\newcommand{\Norm}[2]{\left\lVert#1\right\rVert_{#2}}
\newcommand\lex{\mathrm{lex}}
\newcommand\Llex{L\mbox{-}\mathrm{lex}}
\newtheorem{theorem}{Theorem}[section]
\newtheorem{proposition}[theorem]{Proposition}
\newtheorem{lemma}[theorem]{Lemma}
\newtheorem{remark}[theorem]{Remark}
\newtheorem{corollary}[theorem]{Corollary}
\def\l{\left}
\def\r{\right}
\begin{document}
\title{
Minimal Lipschitz and $\infty$-Harmonic Extensions \\
of Vector-Valued Functions on Finite Graphs}

\author[1]{Miroslav Ba{\v{c}}{\'a}k}
\author[2]{Johannes Hertrich}
\author[2]{Sebastian Neumayer}
\author[2,3]{Gabriele Steidl}
\affil[1]{\footnotesize Max Planck Institute for Mathematics in the Sciences, Inselstr.~22, 04103 Leipzig, Germany,
	bacak@mis.mpg.de}
\affil[2]{\footnotesize Department of Mathematics,
	Technische Universit\"at Kaiserslautern,
	Paul-Ehrlich-Str.~31, 67663 Kaiserslautern, Germany, \{jhetric,neumayer,steidl\}@mathematik.uni-kl.de}
\affil[3]{Fraunhofer ITWM, Fraunhofer-Platz 1,
	67663 Kaiserslautern, Germany}
\maketitle
	
\begin{abstract}
This paper deals with extensions of vector-valued functions on finite graphs fulfilling
distinguished minimality properties.
We show that so-called $\lex$ and $\Llex$ minimal extensions are actually the same  
and call them minimal Lipschitz extensions.
Then we  prove that the solution of the graph $p$-Laplacians
converge to these extensions as $p\to \infty$.
Furthermore, we examine the relation between minimal Lipschitz extensions and iterated
weighted midrange filters and address their connection  
to $\infty$-Laplacians for scalar-valued functions.
A convergence proof for an iterative algorithm proposed by Elmoataz \emph{et al.}~(2014) for finding the zero
of the $\infty$-Laplacian is given.
Finally, we present applications in image inpainting.
\end{abstract}

\textbf{Keywords: }{$p$-Laplacian, $\infty$-Laplacian, graph Laplacian, $\infty$-harmonic extension, absolutely minimal Lipschitz extension, midrange filter, image inpainting, nonlocal techniques}

\section{Introduction} \label{sec:intro}
In this paper, we study an $\infty$-harmonic variant of the Dirichlet problem for vector-valued functions defined on finite graphs. 
To be more specific, we assume that a vector-valued function is given on a subset of the vertex set (``boundary'') 
and our goal is to extend this function onto the whole vertex set so that the extension is $\infty$-harmonic. 
However, as observed by \cite{SS2012}, $\infty$-harmonic extensions of vector-valued functions 
on finite graphs are not uniquely determined. To overcome this non-uniqueness issue they introduced 
the stronger notion of \emph{tight extensions} and
proved that such extensions exist and are unique. 
In the present paper, we approach these extensions from a different point of view 
and call them  \emph{minimal Lipschitz extension}, 
since its local Lipschitz constant (``oscillation'') is in some sense optimal. 
We therefore aim at finding a solution to the above boundary problem which is minimal Lipschitz.

In \cite[Section 2]{SS2012} an algorithm for computing these minimal Lipschitz extensions in the \emph{scalar-valued} case was presented. 
To the best of our knowledge there exists no meaningful algorithm for computing minimal Lipschitz extensions of \emph{vector-valued} functions 
and it is our goal to address their approximation in the present paper. Along the way we also clarify various aspects 
concerning minimal Lipschitz extensions and their relations to other concepts including iterated midrange filters, minimizers of $p$-energy functionals, $p$-Laplacians 
and $\infty$-Laplacians.

The topic of minimal Lipschitz extensions on graphs
appears in various subfields of mathematics and computer science from different
points of view and with different notations.
These areas include approximation theory \cite{Descloux1963}, 
discrete mathematics/graph theory \cite{PSSW2005}, 
data and image processing \cite{BN2003,elion-vese,ETT2015,SM2000} including manifold-valued data \cite{BT2018}, and 
mathematical morphology \cite{Serra1982} to mention only a few.
While many results on minimal Lipschitz extensions are available for scalar-valued functions, the vector-valued case has been less studied.

We work with \emph{weighted} graphs which turns out to be crucial for the applications in image processing. 
Indeed, in Section \ref{sec:numerics}, we present applications of minimal Lipschitz extensions for inpainting of vector-valued images
which rely upon representing the image under consideration as a function on an appropriate weighted graph which is obtained by \emph{nonlocal} patch-based
techniques.

As a matter of fact, \cite{SS2012} define tight extensions also for functions defined 
on a bounded, open, connected subset $U\subset\R^n$. 
This is closely related to \emph{absolutely minimizing Lipschitz extensions} (AMLE)
and therefore also to $\infty$-harmonic functions. 
Indeed, \cite{Jensen1993}, stimulated by the work \cite{aronsson1967}, proved that for
$g\in C(\partial U; \mathbb R)$,
the  boundary-value problem
\begin{align*}
-\Delta_\infty f  = 0 \quad \text{in } U, \qquad f =g \quad \text{on } \partial U,
\end{align*}
with the $\infty$-\emph{Laplacian} defined by
\begin{equation*}
\Delta_\infty f \coloneqq |\nabla f|^{-2}\sum_{i,j=1}^n \partial_i f \; \partial_{ij} f \; \partial_{j} f,
\end{equation*}
has a unique viscosity solution $f\in C(\bar{U};\mathbb R)$.
\cite{CEG} showed its equivalence to the AMLE of $g$.
In connection with image interpolation algorithms and elliptic partial differential operators, AMLEs 
were studied by \cite{CMS98} inspired by the work of \cite{CT1996}.
The operator was considered, e.g.,~for comparisons of image compression algorithms in \cite{GWWBBS2008,SPMEWB2014}.
Even though we are aware of the existence of this deep theory of Lipschitz extensions in $\R^n,$ 
we focus exclusively on functions defined on finite graphs in the present paper. 
For more information on the continuous case, 
the interested reader is also referred to the recent papers \cite{katzourakis2012,katzourakis2014,lindqvist2016} and the references therein.

This paper is organized as follows. 
After fixing the notation in Section \ref{sec:discr_lap}, we introduce $\lex$ and $\Llex$ minimal extensions 
in Section \ref{sec:p_lap} and prove that they actually coincide. 
In Section \ref{approx}, we show that the minimizers of the grouped $p$-energy functionals $E_p$ 
converge to these extensions as $p\to \infty$.
Section \ref{sec:midrange+harm} deals with the relation between iterated midrange filters and minimal Lipschitz extensions.
In Section \ref{sec:inf_lap},  we consider $\infty$-Laplacians for scalar-valued functions and provide a convergence proof for an iterative algorithm of \cite{EDLL2014}. 
Section~\ref{sec:algs} contains finer analyses of the numerical algorithms. 
Finally, Section \ref{sec:numerics} shows applications in image inpainting.
Preliminary results of the present paper are contained in the conference paper \cite{HNSB2019}.
\section{Preliminaries} \label{sec:discr_lap}
%
Let $G \coloneqq (V,E,w)$ be a finite, undirected, connected, weighted graph 
with weight function $w \colon E\to [0,1].$ We use the usual notation $u \sim v$ for $(u,v) \in E.$ Let $\emptyset \neq U \subset V$ and assume $u\not\sim v$ 
if $u,v \in U$.
Denote $M\coloneqq |E|$ and $N\coloneqq|V \setminus U|$. 
We also suppose that $w(u,v) > 0$ if and only if $u\sim v.$ 
Since the graph is not directed, the weights are symmetric, that is, $w(u,v) = w(v,u)$. 
Finally, we suppose that $w(u,u) > 0$ for $u \in V \setminus U$.

The set of functions $f\colon V \to \R^m$ is denoted by $\Knoten(V)$.
For a given function $g\colon U \to \R^m$, 
let $\Knoten_g(V)$ denote those functions  $f \in \Knoten(V)$ with $f\restriction_U = g$, 
which are called \emph{extensions of} $g$.

The \emph{2-$p$-norm}, for $p \in [1,\infty),$ and the \emph{$2$-$\infty$-norm} are defined for $x=\left(x_1,\dots,x_n\right)\in \R^{mn}$ as
$
\Norm{x}{2,p}\coloneqq \bigl(\sum_{i=1}^n  |x_i |^p \bigr)^{1/p} 
$, and
$
\Norm{x}{2,\infty}\coloneqq \max_{i=1,\dots,n}  |x_i |
$,
where $|x_i|$ denotes the Euclidean norm of $x_i \in \R^m$.
For $m=1$, we have just the usual $\ell_p$-norms which we denote by $\| \cdot\|_p$, for $p \in [1,\infty]$.

In \cite{GO2008} the discrete gradient operator $\nabla_w\colon \Knoten(V) \to \R^{2mM}$ was introduced
by
$
\nabla_w f(u) \coloneqq ( \partial_v f(u) )_{v \sim u},
$
where
$
\partial_v f(u) \coloneqq w(u,v) ( f(v) - f(u)) \in \R^{m}.
$
Let $g\colon U \to \R^m$ be given.
For $f \in \Knoten_g(V)$, we are interested in the \emph{anisotropic energies of the $p$-Laplacians}
\begin{align} \label{E_p}
E_{p}f
&\coloneqq     \| \nabla_w f  \|_{2,p} ^p 
=  \sum_{u \in V} \Big( \sum_{v \sim u} w(u,v)^p |f(u) - f(v)|^p \Big), \\
E_{\infty} f
&\coloneqq  \| \nabla_w f  \|_{2,\infty} 
= \max_{u \in V} \left(\max_{v \sim u} w(u,v) |f(u) - f(v)| \right)\\
&\, = \max_{u \in V\setminus U} \left(\max_{v \sim u} w(u,v) |f(u) - f(v)| \right).
\end{align}
The functionals $E_p$, for $p \in (1,\infty),$ are strictly convex and hence they have a unique global minimum $f_p$.
Besides $E_p$, the functional
\begin{align*}
E_{\infty,p} f
&\coloneqq \left\| \, \left( \| \nabla_w f (u) \|_{2,\infty} \right)_{u \in V \setminus U} \, \right\|_p^p 
=  
\sum_{u \in V\setminus U} \Big( \max_{v \sim u} w(u,v)^p |f(u) - f(v)|^p \Big), \quad p \in [1,\infty)
\end{align*}
was considered by \cite{SS2012}.
This functional is not strictly convex for $p \in (1,\infty)$, 
but has nevertheless a unique minimizer $f_{\infty,p} \in \Knoten_g(V)$ of $E_{\infty,p}$;
see \cite{Hertrich2018}.
In contrast to $E_p$ or $E_{\infty,p}$, for $p \in (1,\infty)$, the functional $E_{\infty}$ has in general many minimizers.
In this paper, we want to accent minimizers of $E_\infty$ with distinguished properties.

Using $\Gamma$-convergence arguments, it is not hard to show that
every cluster point of the sequence of the minimizers of $E_p$, resp.~$E_{\infty,p}$
is a minimizer of $E_\infty$. For convenience, we add the short proof.

\begin{lemma} \label{lem:gamma}
	Every cluster point of the sequence of minimizers $\{ f_p\}_p$ of $E_p$ 
	is a minimizer of $E_\infty$. The same holds true for the minimizers $\{ f_{\infty,p} \}_p$ of $E_{\infty,p}.$ 
\end{lemma}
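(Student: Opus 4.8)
The plan is to pass from the $p$-th powers to the genuine norms. Since $t \mapsto t^{1/p}$ is strictly increasing on $[0,\infty)$, the minimizer $f_p$ of $E_p$ over $\Knoten_g(V)$ coincides with the minimizer of $F_p \coloneqq (E_p)^{1/p} = \Norm{\nabla_w \cdot}{2,p}$, and likewise $f_{\infty,p}$ minimizes $F_{\infty,p} \coloneqq (E_{\infty,p})^{1/p}$. I would therefore prove the statement for the reparametrised functionals $F_p$ and $F_{\infty,p}$, whose limit is the honest $\infty$-norm $E_\infty = \Norm{\nabla_w \cdot}{2,\infty}$ rather than an ill-behaved power of it. This reparametrisation is the one genuine idea: taking $p$-th powers of $\Norm{\nabla_w \cdot}{2,p}$ directly would make the target degenerate as $p \to \infty$, whereas the roots leave the minimizers untouched.

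The only analytic input needed is the finite-dimensional norm sandwich: for a vector grouped into $n$ Euclidean blocks one has $\Norm{x}{2,\infty} \le \Norm{x}{2,p} \le n^{1/p}\Norm{x}{2,\infty}$, with $n^{1/p} \to 1$. Applied to $\nabla_w f$ (which carries $2M$ blocks) this gives, for every fixed $f \in \Knoten_g(V)$, the pointwise bound $E_\infty f \le F_p f \le (2M)^{1/p} E_\infty f$, hence $F_p f \to E_\infty f$. The same holds for $F_{\infty,p}$: after setting $a_u(f) \coloneqq \max_{v\sim u} w(u,v)|f(u)-f(v)|$ the functional reads $F_{\infty,p} f = \Norm{(a_u(f))_{u \in V\setminus U}}{p}$, a sum of $N$ terms, while $E_\infty f = \max_{u\in V\setminus U} a_u(f)$ by the last identity in the definition of $E_\infty$, so the same sandwich with constant $N^{1/p}$ applies.

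With these bounds I would verify the two conditions of $F_p \gto E_\infty$ on $\Knoten_g(V)$. For the $\liminf$-inequality, any sequence $f_p \to f$ satisfies $F_p(f_p) \ge \Norm{\nabla_w f_p}{2,\infty} \to E_\infty f$ by continuity of $f \mapsto \Norm{\nabla_w f}{2,\infty}$, whence $\liminf_p F_p(f_p) \ge E_\infty f$. For the recovery inequality the constant sequence $f_p \equiv f$ already works, since $F_p f \le (2M)^{1/p} E_\infty f \to E_\infty f$. The identical two estimates hold verbatim for $F_{\infty,p}$ with $2M$ replaced by $N$.

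Finally I would invoke the elementary consequence of $\Gamma$-convergence for minimizers. Let $\hat f$ be a cluster point of $\{f_p\}$, say $f_{p_k} \to \hat f$; note $\hat f \in \Knoten_g(V)$ because each $f_p$ equals $g$ on the finite set $U$. For an arbitrary competitor $h \in \Knoten_g(V)$, minimality of $f_{p_k}$ yields $F_{p_k}(f_{p_k}) \le F_{p_k}(h)$, and combining the $\liminf$-inequality at $\hat f$ with the recovery estimate at $h$ gives $E_\infty \hat f \le \liminf_k F_{p_k}(f_{p_k}) \le \limsup_k F_{p_k}(h) \le E_\infty h$. As $h$ was arbitrary, $\hat f$ minimizes $E_\infty$, and the same chain with $F_{\infty,p_k}$ handles the cluster points of $\{f_{\infty,p}\}$. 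I expect no serious obstacle here: the entire difficulty is compressed into the root-reparametrisation that makes a constant recovery sequence admissible, after which the norm sandwich does all the work.
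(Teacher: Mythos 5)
Your proof is correct and takes essentially the same route as the paper's: pass to the roots $E_p^{1/p}$ and $E_{\infty,p}^{1/p}$ (which preserve minimizers), use the finite-dimensional norm sandwich $\Norm{x}{2,\infty}\le\Norm{x}{2,p}\le n^{1/p}\Norm{x}{2,\infty}$ to get (uniform, monotone) convergence to $E_\infty$, and conclude by the standard $\Gamma$-convergence argument for cluster points of minimizers. The paper's proof is simply a condensed version of your chain, leaving the $\liminf$/recovery verification and the final minimality estimate implicit where you spell them out.
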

\begin{proof} Recall that the $\ell_p$-norms satisfy
	\begin{equation*}
	\|x\|_\infty \le \|x\|_p \le N^{1/p} \|x\|_\infty, \quad x \in \R^N.
	\end{equation*}
	In particular, $\| \cdot \|_p \to \| \cdot \|_\infty$ as $p \to \infty$
	uniformly on bounded sets and the convergence is monotone.
	These properties are inherited by the convergence $E_p^{1/p} \to E_\infty$ as $p \to \infty$.
	This implies that $E_p^{1/p}$ $\Gamma$-converges to $E_\infty$ as $p \to \infty$ and we obtain the desired results since the functionals $E_p^{1/p}$ and $E_p$ have the same minimizers.
	Similar arguments can be applied for the functional $E_{\infty,p}$. 
\end{proof}
\section{Minimal Lipschitz Extensions} \label{sec:p_lap}

Let again $U \subset V$ be a nonempty subset of the vertices of the graph $G \coloneqq (V,E,w)$ and let $g\colon U\to \R^m.$ 
We start by recalling the definitions of two types of minimal extensions of $g$ and we then show in Theorem \ref{thm:coincide} below that they actually coincide.
For $u \in V \setminus U$, let 
\begin{equation*}
lf(u,v) \coloneqq w(u,v)|f(u) - f(v)|= |\partial_v f(u)|
\end{equation*}
and 
$\lex(f) \in \R^M$ be the vector
with entries $\left( lf(u,v) \right)_{v \sim u}$ in nonincreasing order.
Note that we  count the entries $lf(u,v) = lf(v,u)$ only once.
For $u \in V \setminus U$, define
\begin{equation*}
Lf(u) \coloneqq \max_{v \sim u} lf(u,v) =  \| \nabla_w f  (u)\|_{2,\infty},
\end{equation*}
and the vector
$\Llex (f) \in \R^{|V\setminus U|}$ with entries $\left(Lf(u) \right)_{u\in V \setminus U}$ in nonincreasing order.
Denoting by $\le$ the lexicographical ordering, a function $f\in \Knoten_g(V)$ is called 
\begin{enumerate}
	\item
	$\lex$ \emph{minimal extension} (of $g$) if 
	$\lex(f) \le \lex(h)$ for every $h \in \Knoten_g(V)$, and
	\item
	$\Llex$ \emph{minimal extension} (of $g$) if $\Llex (f) \le \Llex(h)$ for every $h \in \Knoten_g(V)$.
\end{enumerate}
The first notation can be found for instance in~\cite{KRSS2015}.
The $\Llex$ \emph{minimal extension} was called \emph{tight extension} in the paper \cite{SS2012}.

The existence of $\Llex$ minimal extensions in the non-weighted case was shown in \cite[Theorem 1.2]{SS2012}.
The existence in the weighted setting as well as for $\lex$ minimal extensions can be proved similarly; see \cite{Hertrich2018}.
By definition we clearly have $\lex_1(f) = \Llex_1(f)$, where the subscript denotes the coordinate index.
In this section, we want to show that $\lex$ and $\Llex$ minimal extensions indeed coincide.
To this end, we need the following lemma.

\begin{lemma} \label{lem:wichtig}
	Let $f \in \mathcal{H}_g(V)$ be a $\lex$ minimal extension and $\tilde f \in  \mathcal{H}_g(V)$
	with $f \neq \tilde f$ and $\lex_1 (f) = \lex_1 (\tilde f)$.
	Let $k \in \{1,\ldots, M\}$ be the largest index such that
	$( \lex_i (f) )_{i=1}^k = ( \lex_i (\tilde f) )_{i=1}^k$
	and $K \coloneqq \lex_k (f)$.
	Then $f$ and $\tilde f$ coincide on the set
	\[
	W \coloneqq \left\{ u \in V\setminus U: \exists v \sim u \quad \mbox{such that} \quad lf(u,v) \ge K \right\}.
	\]
\end{lemma}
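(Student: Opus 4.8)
The plan is to compare $f$ with the midpoint $h \coloneqq \tfrac12(f+\tilde f)$, which again lies in $\Knoten_g(V)$, and to exploit that the Euclidean norm on $\R^m$ is strictly convex. For every edge one has the pointwise estimate $lh(u,v)\le \tfrac12\bigl(lf(u,v)+l\tilde f(u,v)\bigr)$, and equality here together with $lf(u,v)=l\tilde f(u,v)$ forces $f(u)-f(v)=\tilde f(u)-\tilde f(v)$ (equality in the triangle inequality for a strictly convex norm, combined with equal lengths). This strict-convexity equality is the mechanism that will ultimately pin $f$ and $\tilde f$ together.

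First I would show that $h$ shares the top $k$ sorted entries with $f$. Writing $s_j(v)\coloneqq\sum_{i=1}^j v_i^\downarrow$ for the sum of the $j$ largest entries of a vector, subadditivity of $s_j$ together with the pointwise estimate gives $s_j(\lex(h))\le \tfrac12 s_j(\lex(f))+\tfrac12 s_j(\lex(\tilde f))$, which for $j\le k$ equals $s_j(\lex(f))$ because $\lex_i(f)=\lex_i(\tilde f)$ for $i\le k$. On the other hand, $\lex$ minimality of $f$ yields $\lex(f)\le\lex(h)$ lexicographically. Inspecting the first index at which $\lex(f)$ and $\lex(h)$ could differ and comparing it with this partial-sum bound produces a contradiction unless the two vectors agree up to index $k$; hence $\lex_i(h)=\lex_i(f)=\lex_i(\tilde f)$ for all $i\le k$.

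Next I would convert these equalities of sorted vectors into edgewise equalities. Since $f$ is $\lex$ minimal and $k$ is maximal, $\lex_{k+1}(f)<\lex_{k+1}(\tilde f)\le\lex_k(\tilde f)=K$, so there are exactly $k$ edges with $lf\ge K$; call this set $T$, and note that $W$ consists precisely of the endpoints in $V\setminus U$ of the edges of $T$. Peeling the super-level sets $\{lf\ge t\}$ from $t=\lex_1(f)$ down to $t=K$ and using the pointwise estimate together with the equalities just established (equality in the majorization bound on each level additionally forces $f$ and $\tilde f$ to be comonotone on the edges concerned) shows that $lf(e)=l\tilde f(e)=lh(e)$ for every $e\in T$. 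By the strict-convexity equality condition this gives $f(u)-f(v)=\tilde f(u)-\tilde f(v)$ for every edge $(u,v)\in T$; that is, $f$ and $\tilde f$ have identical increments across all edges of $T$.

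Finally, equal increments mean that $f-\tilde f$ is constant on each connected component of the subgraph $(V,T)$. Since $f=\tilde f=g$ on $U$, any component meeting $U$ carries $f=\tilde f$, and propagating along the edges of $T$ yields $f=\tilde f$ on all of $W$. I expect the main obstacle to be exactly this last passage from equal increments to equal values: it requires that no connected component of $(V,T)$ avoids the boundary $U$. This is a genuine structural property of $\lex$ minimal extensions — a boundary-free high-oscillation component could be flattened so as to strictly decrease $\lex(f)$ — and verifying it, equivalently ruling out that the given $\tilde f$ differs from $f$ by a nonzero constant on such a component, is the delicate point on which the argument turns.
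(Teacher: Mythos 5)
Your first half is sound and essentially parallels Part 1 of the paper's own proof: comparing $f$ with the midpoint $h=\tfrac12(f+\tilde f)\in\Knoten_g(V)$ and using strict convexity of the Euclidean norm (equal lengths plus equality in the triangle inequality forces $f(u)-f(v)=\tilde f(u)-\tilde f(v)$) is exactly the paper's mechanism for obtaining equal increments on all edges with $lf(u,v)\ge K$; your majorization bookkeeping via partial sums $s_j$ is a clean variant of the paper's ``largest offending value $C$'' contradiction and works. The genuine gap is precisely the step you flag in your last paragraph and then leave unproven: passing from equal increments across the high-level edge set $T$ to equal \emph{values} on $W$ requires that the propagation of $f=\tilde f$ starting from $U$ never gets stranded in a component of $(V,T)$ that avoids the boundary. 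Since the lemma's entire content is this propagation, a proof that stops at ``verifying this is the delicate point'' is incomplete at its crux.

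The paper closes exactly this gap with a two-stage argument you would need to supply. First, a scaling perturbation (Part 2 of its proof): if no edge $u\sim v$ with $v\in U$ attains $lf(u,v)=\lex_1(f)$, there is a slack $\delta>0$ on all boundary-touching edges; setting $\hat f(u)\coloneqq(1-\varepsilon)f(u)$ on $V\setminus U$ shrinks every interior--interior level by the factor $(1-\varepsilon)$, while boundary-touching levels grow by at most $\varepsilon\, w(u,v)|f(u)|$ and stay below $\lex_1(f)$ for $\varepsilon$ small, so $\lex(\hat f)<\lex(f)$, contradicting minimality. Hence the top level is attained on an edge into $U$, and your increment equality then yields $f(u)=\tilde f(u)$ at its interior endpoint because $f(v)=\tilde f(v)=g(v)$. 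Second, an induction with a \emph{growing} boundary (Part 3): absorb $u$ into the boundary, setting $U_1\coloneqq U\cup\{u\}$ and $g_1(u)\coloneqq f(u)$, cut the edges from $u$ into $U$ and delete the corresponding (equal) entries of $\lex(f)$ and $\lex(\tilde f)$, then rerun the scaling argument for the new top entry, repeating until all edges with $lf\ge K$ are exhausted. Note that this induction in particular \emph{proves} your one-shot structural claim that every $T$-component meets $U$: at each stage the anchoring edge may touch only the extended boundary, i.e.\ an interior vertex determined at an earlier stage, and it is the recursion, not a single flattening, that carries the anchoring all the way down to level $K$. Your flattening heuristic is the right intuition for the base step, but as stated it is an expectation, not an argument.
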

Note that for a $\lex$ minimal extension $f\in \mathcal{H}_g(V)$ the case $k=M$, i.e.~$\lex (f) = \lex (\tilde f)$ implies $f = \tilde f$.
This means that the $\lex$ minimal extension is unique.
\begin{proof}
	1. First, we prove that $f(u) - f(v) = \tilde f(u) - \tilde f(v)$
	for all $u \sim v$ with $lf(u,v) \ge  K$. Consider $u \sim v$ with $lf(u,v) \ge  K$.
	We suppose that if
	$lf(u,v) =  lf(\tilde u, \tilde v) = l\tilde f(u,v) = l\tilde f(\tilde u,\tilde v)$
	and those values appear at positions $i$ and $j$ in $\lex(f)$ and $\lex(\tilde f)$, then
	the corresponding values $lf(u,v), l\tilde f(u,v)$ and $lf(\tilde u,\tilde v), l\tilde f(\tilde u,\tilde v)$ have the same position.
	
	For a contradiction, assume it is not the case and let thus $C$ be the largest value in $( \lex_i (f))_{i=1}^k$
	with $f(u) - f(v) \neq \tilde f(u) - \tilde f(v)$.
	For $u \sim v$ with $lf(u,v) > C$, 
	we have $f(u) - f(v) = \tilde f(u) - \tilde f(v)$
	and for $lf(u,v) = C$ the relation $|f(u) - f(v)| \ge |\tilde f(u) - \tilde f(v)|$,
	where at least one $u \sim v$ with $f(u) - f(v) \neq \tilde f(u) - \tilde f(v)$ exists.
	For $u \sim v$ with $lf(u,v) < C$ we have $l\tilde f (u,v) \le C$.
	For $h \coloneqq \frac12 (f + \tilde f) \in \Knoten_g(V)$ and  $u \sim v$, we obtain 
	\begin{equation}
	|h(u) - h(v)| =  \tfrac12 \bigl|f(u) - f(v) + \tilde f(u) - \tilde f(v)\bigr|
	\le  \tfrac12  \left( |f(u) - f(v)| +  |\tilde f(u) - \tilde f(v)|\right)\label{ineq}.
	\end{equation}
	Consequently, by the previous considerations,
	$
	lh(u,v) \le lf(u,v)
	$ 
	whenever $lf(u,v) \ge C$. Consider $u\sim v$ with  $lf(u,v) = C$  with $f(u) - f(v) \neq \tilde f(u) - \tilde f(v)$.
	Then either
	$|f(u) - f(v)| > |\tilde f(u) - \tilde f(v)|$ which implies
	$
	lh(u,v) \le lf(u,v)
	$ 
	or $|f(u) - f(v)| =  |\tilde f(u) - \tilde f(v)|$.
	For two vectors $a,b \in  \R^m$ with $|a| = |b|$ we have $|a+b| = |a| + |b|$ if and only if $a = b$.
	Thus we have strict inequality in \eqref{ineq} which results again in the strict inequality $lh(u,v) < lf(u,v)$.
	Finally we conclude $\lex(h) < \lex (f)$  which contradicts the $\lex$ minimality of $f$.
	Hence  $f(u) - f(v) = \tilde f(u) - \tilde f(v)$ for all $u \sim v$ with $lf(u,v) \ge \lex_k(f)$.
	
	2. Next, we show that there exits $u \sim v$ with $v \in U$ such that
	$lf(u,v) = \lex_1(f)$.
	Assume in contrary that this is not the case, i.e., there exists $\delta >0$ such that 
	$lf(u,v) \le \lex_1(f) - \delta$ for all $u \sim v$ with $v \in U$.
	For $\varepsilon > 0$, consider the function $\hat f \in \mathcal{H}_g(V)$ with
	$\hat f (u) \coloneqq (1- \varepsilon) f(u)$, $u \in V\setminus U$.
	Then we have 
	for all $u_1,u_2 \in  V\setminus U$ with $u_1 \sim u_2$ that
	$l\hat f(u_1,u_2) = (1-\varepsilon) lf(u_1,u_2)  < lf(u_1,u_2)$
	and 
	for all $u \in V\setminus U$ and $v \in U$ that
	\begin{align*}
	l \hat f(u,v) 
	= w(u,v) |(1-\varepsilon) f(u) - f(v)| \le lf(u,v) + \varepsilon w(u,v) |f(u)|
	< \lex_1 f
	\end{align*}
	for $\varepsilon < \delta/ \max_{u\sim v} \left( w(u,v) |f(u)| \right)$.
	Thus $\lex \hat f < \lex f$ which is a contradiction.
	
	3. Let $\tilde f \in \mathcal{H}_g(V)$ with $\lex_1(f) =\lex_1(\tilde f)$. 
	For $u \sim v$ with $v \in U$ such that $lf(u,v) = \lex_1(f)$, Part 1 of the proof implies that $f(u) = \tilde f(u)$.
	Choose one such $u$ and set $U_1 \coloneqq U \cup \{u\}$, extend $g$ to $g_1$ on $U_1$ by  
	$g_1(u) \coloneqq f(u)$.  
	Cut all edges $u\sim v$ with $v \in U$ and remove the corresponding entries in  $\lex(f)$ and $\lex(\tilde f)$.
	Note that only entries with the same value are removed, including the first one.
	Consider $f$ and $\tilde f$ as extensions of  $g_1$, where $f$ is still the $\lex$ minimal extension of $g_1$.
	
	The whole procedure is repeated with respect to the new first component $\lex_1(f)$ and so on
	until all edges with $lf(u,v)\ge K$ are removed. This yields the assertion. 
\end{proof}

Now we can prove the equivalence between $\lex$ and $\Llex$ minimal extensions.
\begin{theorem}\label{thm:coincide}
	There exists a unique $\lex/ \Llex$ minimal  extension $f \in \Knoten_g(V)$
	and both extensions coincide.
\end{theorem}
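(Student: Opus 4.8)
The plan is to deduce the theorem from Lemma~\ref{lem:wichtig}. Existence of a $\lex$ minimal and of an $\Llex$ minimal extension is already available (in the weighted case via \cite{Hertrich2018}), and the remark following Lemma~\ref{lem:wichtig} shows that the $\lex$ minimal extension is \emph{unique}; call it $f$. Hence it suffices to prove the single implication that \emph{every $\Llex$ minimal extension $h$ is already $\lex$ minimal}, for then $h=f$ by uniqueness, which simultaneously yields uniqueness of the $\Llex$ minimal extension, the identity $f=h$, and the fact that $f$ itself is $\Llex$ minimal. So I would fix an $\Llex$ minimal extension $h$ and assume, for contradiction, that $h\neq f$.

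First I would align the top entries. Since $\lex_1=\Llex_1=E_\infty$ and $f$, $h$ are $\lex$, resp.~$\Llex$ minimal, we get $\lex_1(f)\le \lex_1(h)=\Llex_1(h)\le \Llex_1(f)=\lex_1(f)$, so $\lex_1(f)=\lex_1(h)$ and both minimize $E_\infty$. This lets me invoke Lemma~\ref{lem:wichtig} with $\tilde f=h$: with $k$ the largest index for which the top $k$ entries of $\lex(f)$ and $\lex(h)$ agree, $K\coloneqq \lex_k(f)$ and $W$ the associated vertex set, Part~1 of that lemma gives $f(u)-f(v)=h(u)-h(v)$, hence $lf(u,v)=lh(u,v)$, for every edge with $lf(u,v)\ge K$, and $f=h$ on $W$.

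The core step is to turn this $\lex$-level rigidity into an $\Llex$-level comparison. Because $f$ is $\lex$ minimal we have $\lex(f)\le\lex(h)$ with first strict gap at position $k+1$; a short sorting argument shows this forces $k$ to equal the number of edges with $lf\ge K$ and, crucially, that $h$ has \emph{no} edge of value exceeding $K$ beyond the shared top-$k$ edges. Consequently, for $u\in W$ the maximum $Lh(u)$ is attained on a shared high edge, so $Lh(u)=Lf(u)\ge K$, while for $u\notin W$ one has $Lf(u)<K$ and $Lh(u)\le K$. Thus the part of $\Llex(h)$ with entries $\ge K$ contains that of $\Llex(f)$, the surplus consisting only of entries equal to $K$. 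If this surplus is nonempty, then $\Llex(f)<\Llex(h)$, contradicting the $\Llex$ minimality of $h$; hence the two vectors must agree on all entries $\ge K$.

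It remains to treat the equal high part, and here I expect the main obstacle. The natural move is to peel: adjoin $W$ to the boundary, setting $U'\coloneqq U\cup W$ and $g'\coloneqq f\restriction_{U'}=h\restriction_{U'}$, and to recurse on the smaller instance formed by the free vertices $V\setminus U'$, whose incident edges now all carry values $<K$. One checks that every edge either lies inside $U\cup W$, where $f=h$ is frozen and contributes identically to both sorted vectors, or is incident to a free vertex, so that $\lex$ and $\Llex$ split as a common high block plus the reduced instance; this shows $f$ restricts to the $\lex$ minimal and $h$ to an $\Llex$ minimal extension of $g'$, and the first remaining discrepancy reappears strictly below $K$. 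Iterating (the number of free vertices strictly decreases) forces $f=h$, the desired contradiction. The delicate points to get right are that restriction preserves both minimality properties and that the values $Lh(u)$ at $u\in W$, which also see edges crossing into the free part, stay pinned at $Lf(u)$ throughout the recursion; these are exactly the bookkeeping issues that the peeling in Part~3 of the proof of Lemma~\ref{lem:wichtig} is designed to handle.
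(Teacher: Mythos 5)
Your proposal is correct, and its skeleton is the same as the paper's: align the top entries via $\lex_1=\Llex_1$, invoke Lemma~\ref{lem:wichtig} to freeze $f$ and $h$ on $W$ and on all edges with $lf(u,v)\ge K$, then adjoin $W$ to the boundary and remove the internal edges. There are, however, two genuine organizational differences. First, you prove the single implication that every $\Llex$ minimal extension equals the unique $\lex$ minimal one, so uniqueness of the $\Llex$ minimal extension falls out as a byproduct; the paper instead imports $\Llex$ uniqueness from \cite{SS2012} (extended to weights in \cite{Hertrich2018}) as step~1 of its proof, and only then derives coincidence. Your route is slightly more self-contained. Second, your middle portion is heavier than necessary: the paper exploits that, by maximality of $k$, one has $\lex_{k+1}(f)<\lex_{k+1}(h)\le K$, and since all edges inside $U\cup W$ (in particular the $k$ high edges) carry identical values for $f$ and $h$, removing this common multiset transfers the strict gap at position $k+1$ directly to the \emph{first} entry of the reduced vectors; because $\lex_1=\Llex_1$ on the reduced instance and the values $Lh(u)$, $u\in W$, are pinned at $Lf(u)\ge K$ (your own observation, which the paper leaves implicit), this contradicts the $\Llex$ minimality of $h$ after a single reduction, with no recursion. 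Your surplus-of-$K$-entries comparison and the peeling recursion are nevertheless sound — the delicate points you flag do check out, since any competitor in the reduced problem with lexicographically smaller $\Llex$ must have top entry below $K$, so its crossing edges cannot unpin the $L$-values on $W$, and gluing then preserves both minimality properties — but if you execute the recursion carefully, it terminates already at the second alignment step for exactly the reason above: the reduced top entries of $f$ and $h$ can never realign. So the iteration buys nothing beyond the paper's one-step argument, at the cost of verifying restriction-preservation; conversely, your write-up makes explicit several bookkeeping facts (no $h$-edge exceeds $K$ off the shared top-$k$; $Lh=Lf$ on $W$) that the paper's terse final sentence silently relies on.
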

\begin{proof}
	1. The uniqueness of the $\Llex$ minimal extension in the unweighted case was shown in \cite[Theorem 1.2]{SS2012}
	and it is not hard to extend the arguments to the weighted setting; see \cite{Hertrich2018}.
	The uniqueness of the $\lex$ minimal extension follows directly from Lemma \ref{lem:wichtig}.
	
	2. 
	Let $f\in \Knoten_g(V)$ be the $\lex$ minimal extension and $\tilde f\in \Knoten_g(V)$ the $\Llex$ minimal extension.
	Assume that $f \neq \tilde f$.
	Then $\lex (f) < \lex (\tilde f)$ and $\Llex (f) > \Llex (\tilde f)$.
	If
	$\Llex_1(f) = \lex_1 (f) < \lex_1 (\tilde f) = \Llex_1(\tilde f)$ we have a contradiction.
	Thus, $\lex_1 (f) = \lex_1 (\tilde f)$ and we can apply Lemma \ref{lem:wichtig}
	which implies that $f$ and $\tilde f$ coincide on the set $W$.
	Now we can consider $f$ and $\tilde f$ as extensions of $g$ extended to $U \cup W$,
	where the edges between vertices in $U \cup W$ and the corresponding entries in $\lex (f)$ and $\lex(\tilde f)$ are removed.
	But for this new constellation 
	we have $\lex_1 (f) < \lex_1 (\tilde f)$ which again contradicts the
	$\Llex$ minimality of~$\tilde f$. 
\end{proof}

We call the minimal $\lex/ \Llex$ extension of $g$ on $G$  the \emph{minimal Lipschitz extension} of $g$, or sometimes shortly the \emph{minimal extension} of $g$.

\section{Approximations of Minimal Lipschitz Extensions}\label{approx}

In this section, we focus on algorithms for computing minimal Lipschitz extensions. 
In the scalar-valued case minimal Lipschitz extensions coincide with $\infty$-harmonic functions 
and one can therefore use algorithms for $\infty$-harmonic functions. Such an algorithm (for non-weighted graphs) 
can be found in \cite[Section 2.1]{SS2012}, where the authors reference earlier papers \cite{LLPSU1999,PSSW2005}. 
Note that it is a polynomial-time algorithm. For weighted graphs, a polynomial-time algorithm for computing $\infty$-harmonic functions was given in \cite{KRSS2015} 
along with a faster variant for practical computations.

To the best of our knowledge there exist no methods for computing minimal Lipschitz extensions of general vector-valued functions; this problem is also implicitly formulated in \cite[Section 2.1]{SS2012}. 
Our aim is to address this issue. To this end recall the following theorem, which is just a simple generalization of a result in \cite[Theorem 1.3]{SS2012} for weighted graphs.
\begin{theorem}
	For any $m \ge 1$, the sequence $\{ f_{\infty,p} \}_{p}$ of minimizers $f_{\infty,p} \in \Knoten_g(V)$ of $E_{\infty,p}$ 
	converges to the minimal Lipschitz extension of $g,$ as $p\to\infty.$
\end{theorem}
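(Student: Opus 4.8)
The plan is to recast the energy in lexicographic terms and then run a compactness argument. First I would note that $\max_{v\sim u} w(u,v)^p|f(u)-f(v)|^p = (Lf(u))^p$, so $E_{\infty,p}f = \sum_{u\in V\setminus U}(Lf(u))^p = \Norm{\Llex(f)}{p}^p$, the last equality because the $\ell_p$-norm is invariant under reordering the entries. Hence $(E_{\infty,p}f)^{1/p}=\Norm{\Llex(f)}{p}$ and $f_{\infty,p}$ is exactly a minimizer of $f\mapsto\Norm{\Llex(f)}{p}$. Writing $f^\ast$ for the minimal Lipschitz extension (the common $\lex/\Llex$ minimal extension of Theorem \ref{thm:coincide}), minimality of $f_{\infty,p}$ tested against $f^\ast$ gives the single inequality
\begin{equation*}
\Norm{\Llex(f_{\infty,p})}{p}\le\Norm{\Llex(f^\ast)}{p}\qquad\text{for all }p,
\end{equation*}
which drives the whole proof.

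Next I would establish compactness. From this inequality and $\Norm{x}{\infty}\le\Norm{x}{p}\le N^{1/p}\Norm{x}{\infty}$ one gets $E_\infty(f_{\infty,p})=\Norm{\Llex(f_{\infty,p})}{\infty}\le N^{1/p}\Norm{\Llex(f^\ast)}{\infty}$, which is bounded in $p$. Since every edge weight is strictly positive on the finite edge set, each edge has an endpoint in $V\setminus U$, and $G$ is connected with $f=g$ fixed on $U\neq\emptyset$, a bounded $E_\infty$ controls every oscillation and hence bounds $\Norm{f_{\infty,p}}{\infty}$ uniformly. Thus $\{f_{\infty,p}\}_p$ has cluster points; let $f$ be one, say $f_{\infty,p_k}\to f$ with $p_k\to\infty$. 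By Lemma \ref{lem:gamma} applied to $\{f_{\infty,p}\}$, the cluster point $f$ minimizes $E_\infty$; in particular $\Llex_1(f)=\Llex_1(f^\ast)$, so the first lexicographic coordinate is already matched, and since sorting is continuous, $\Llex(f_{\infty,p_k})\to\Llex(f)$.

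The heart of the argument is to upgrade this to $\Llex(f)=\Llex(f^\ast)$, which by the uniqueness in Theorem \ref{thm:coincide} forces $f=f^\ast$; as the bounded sequence then has $f^\ast$ as its only cluster point, $f_{\infty,p}\to f^\ast$ follows. Since $f^\ast$ is $\Llex$ minimal we already have $\Llex(f^\ast)\le\Llex(f)$ lexicographically, so it remains to prove the reverse inequality $\Llex(f)\le\Llex(f^\ast)$. I would do this level by level: group the coordinates of $\Llex(f^\ast)$ into their distinct values $\mu_1>\mu_2>\cdots$ and show by induction that $f$ and $f^\ast$ carry the same number of coordinates at each level and, in the spirit of Lemma \ref{lem:wichtig}, coincide on the vertices realizing the top level; one then freezes those vertices as new boundary data, deletes the saturated edges, and recurses on the induced sub-problem, where $f$ and $f^\ast$ are again minimizers of the same type. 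The saturated level is pinned down by testing $f_{\infty,p_k}$ against $f^\ast$: any surplus of near-maximal coordinates in $\Llex(f)$ would, for $k$ large, make $\Norm{\Llex(f_{\infty,p_k})}{p_k}$ exceed $\Norm{\Llex(f^\ast)}{p_k}$, contradicting the driving inequality.

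The main obstacle is precisely this transfer of lexicographic order through the $p\to\infty$ limit. A direct comparison of $\Norm{\Llex(f)}{p}$ with $\Norm{\Llex(f^\ast)}{p}$ fails below the top level: once the leading (maximal) coordinates agree, both $p$-norms share the same leading asymptotics as $p\to\infty$, so their difference is of lower order and cannot be read off from the convergence $f_{\infty,p_k}\to f$ alone. This is why the peeling recursion is needed, since each step reduces the comparison to one where the relevant coordinates are again maximal, making the crude estimate $\sum_{i>j}x_i^{p}\le N\,x_j^{p}$ for a sorted vector decisive; and it is why the rigidity of $\Llex$ minimizers from Lemma \ref{lem:wichtig} must be combined with the minimality inequality rather than used in isolation.
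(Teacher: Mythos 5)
Your preparatory steps are all sound: the identity $E_{\infty,p}f=\Norm{\Llex(f)}{p}^p$, the uniform bound and compactness, the use of Lemma \ref{lem:gamma} to see that any cluster point $f$ minimizes $E_\infty$, and the reduction via Theorem \ref{thm:coincide} to showing $\Llex(f)\le\Llex(f^\ast)$. The genuine gap is in the step you call the heart of the argument: pinning down the saturated levels by ``testing $f_{\infty,p_k}$ against $f^\ast$.'' The single inequality $\Norm{\Llex(f_{\infty,p_k})}{p_k}\le\Norm{\Llex(f^\ast)}{p_k}$ cannot detect a surplus of near-maximal coordinates, because the leading coordinates of $\Llex(f_{\infty,p_k})$ are only known to converge to $\mu_1$ \emph{without any rate}: if $\Llex_1(f_{\infty,p_k})=\mu_1-\epsilon_k$ with $\epsilon_k\to 0$ but $p_k\epsilon_k\to\infty$, then $(\mu_1-\epsilon_k)^{p_k}/\mu_1^{p_k}\to 0$, and this slack in the dominant terms absorbs any fixed surplus at lower levels. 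Concretely, $\Llex(f^\ast)=(1,\,0.5)$ and $\Llex(f_{\infty,p_k})=(1-\epsilon_k,\,0.9-\delta_k)$ with $\epsilon_k$ chosen so that $(1-\epsilon_k)^{p_k}=1-2\cdot 0.9^{p_k}$ satisfies your driving inequality for every $k$, yet the limit $(1,\,0.9)$ is lexicographically strictly larger than $(1,\,0.5)$. So no contradiction can be extracted from that inequality, neither at the first level nor at any later level, and the peeling recursion never gets started. (A secondary problem: after ``freezing,'' $f_{\infty,p_k}$ merely converges to $f^\ast$ on the frozen vertices rather than equaling it, and its restriction is not a minimizer of the induced sub-problem, so the recursion is not self-reproducing as claimed.)

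The missing idea is that minimality of $f_{\infty,p_k}$ must be tested against competitors that \emph{depend on $k$} and agree with $f_{\infty,p_k}$ itself on the high-gradient region, so that the problematic near-maximal terms cancel \emph{exactly} instead of being estimated. This is precisely the scheme of the paper's proof of the analogous Theorem \ref{thm:ConvEp} for $E_p$, and of the proofs of the present statement cited in the paper (\cite[Theorem 1.3]{SS2012} and \cite{Hertrich2018}): with $\tilde f$ the cluster point and $W$ the set of vertices carrying the matched top levels, on which the rigidity of Lemma \ref{lem:wichtig} (respectively its $\Llex$ analogue) forces $\tilde f=f^\ast$, one sets $h_{p}\coloneqq f_{\infty,p}$ on $W\cup U$ and $h_{p}\coloneqq f^\ast$ elsewhere. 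Then all terms at vertices realizing the matched levels are identical for $f_{\infty,p}$ and $h_{p}$ and drop out of the difference $E_{\infty,p}(f_{\infty,p})-E_{\infty,p}(h_{p})$, and the remaining comparison takes place across a fixed gap $\delta>0$ between distinct levels of the limits, yielding an estimate of the form $\bigl(\nu-\tfrac{\delta}{4}\bigr)^{p_k}\bigl(1-C\rho^{p_k}\bigr)>0$ with $\rho<1$, which contradicts minimality of $f_{\infty,p_k}$. Your own closing remark — that below the top level the difference of $p$-norms is of lower order and cannot be read off from the convergence alone — identifies the obstruction correctly; the cure is exact cancellation via a $k$-dependent competitor, not a counting argument against the fixed competitor $f^\ast$.
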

\begin{proof}
	See \cite{Hertrich2018} for the weighted-graph case and \cite[Theorem 1.3]{SS2012} for the non-weighted graph case.
\end{proof}

\begin{remark}
	Let us mention that instead of considering $E_{\infty,p}$ we can also prove convergence for the minimizers
	of other functionals
	\begin{equation*}
	E_{\infty,\varphi_p} f \coloneqq \varphi_p \left(\| \nabla_w f\|_{2,\infty} \right),
	\end{equation*}
	where $\varphi_p$ has to fulfill certain properties. For example, the function 
	\begin{equation*}
	\varphi_p(x) \coloneqq \frac1p \mathrm{logexp}(p x) = \frac1p \log \big( \sum_{i=1}^N \exp(px_i) \big)
	\end{equation*}
	is an appropriate choice since the asymptotic function of the function $\operatorname{logexp}$ is the $\operatorname{vecmax}$ function, that is,
	$\lim_{p \to \infty} \frac1p \log \left( \sum_{i=1}^N  \exp(px_i) \right)= \max_{i=1,\ldots,N} x_i$.
\end{remark}
We proceed by studying the sequence $\{f_p\}_p$ of minimizers of $E_p$. For scalar-valued functions, that is, $m=1$, 
its convergence to the minimal extension of $g$ can be shown
by applying the following classical result from approximation theory 
concerning the convergence of P\'olya's algorithm \cite{Polya1913}:
In \cite{Descloux1963} it was proved that, given an affine subspace ${\mathcal K} \subset \R^M$ and $z \in \R^M$, the sequence of $L_p$ approximations
$\{x_p\}_p$ 
defined by
\[x_p \coloneqq \argmin_{x \in {\mathcal K}} \|x - z\|_p, \quad p \in (1,\infty), \]
converges (as $p\to\infty$) to the minimizer
$\hat x_\infty\in\mathcal{K}$ of $\|x - z\|_\infty$
with the following property:
For every  minimizer $x_\infty\in\mathcal{K}$ of $\Vert x - z \Vert_\infty$
consider the vector $\sigma(x_\infty)$ whose coordinates $|x_{\infty,i} - z_i|$ are arranged in nonincreasing order.
Then $\hat x_\infty$ is the minimizer with smallest lexicographical ordering with respect to $\sigma$.
Indeed this vector is uniquely determined and called \emph{strict uniform approximation} of $z$.
Concerning the convergence rate it was shown in \cite{QFMB2005}, see also \cite{EH1990}, that
there exist constants $0< C_1,C_2 < \infty$ and $a \in [0,1]$ depending on ${\mathcal K}$,
such that
\begin{equation*}
C_1 \, \tfrac{1}{p} \, a^p  \le  \|x_p - \hat x_\infty \|_\infty \le C_2 \, \tfrac{1}{p} \, a^p.
\end{equation*}
To apply this result in to minimal Lipschitz extensions, we rewrite
$
E_p f =  2\, \Vert Af\restriction_{V \setminus U} \, + \, b \Vert_p^p,
$
where $A \in \R^{M,|V \setminus U|}$ 
is the matrix representing the linear operator 
$\nabla_w$ on functions restricted to $V \setminus U$ and 
$b \in \R^{M}$ accounts for the fixed values $g$ on $U$.
Then, considering ${\mathcal K} \coloneqq \{Ay + b\colon y \in \R^{|V \setminus U|}\}$
and $z\coloneqq 0$, we obtain the desired result.

For the vector-valued case, that is $m \ge 2$, the convergence of the minimizers of $E_p$ cannot be
deduced in this way and we therefore prove the following theorem.
\begin{theorem}\label{thm:ConvEp}
	For any $m \ge 1$, the sequence $\{ f_{p} \}_{p}$ of minimizers $f_{p} \in \Knoten_g(V)$ of $E_p$ 
	converges to the minimal Lipschitz extension of $g,$ as $p\to\infty.$
\end{theorem}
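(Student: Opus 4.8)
The plan is to combine the $\Gamma$-convergence recorded in Lemma~\ref{lem:gamma} with a selection argument that pins the limit down to the $\lex$ minimal extension $f^\ast$ of Theorem~\ref{thm:coincide} (the minimal Lipschitz extension). The scalar case $m=1$ is already covered by the Descloux--P\'olya result quoted above, so the real task is $m\ge 2$; note that in the reformulation $E_pf=2\,\|\nabla_w f\restriction_{V\setminus U}+b\|_p^p$ the quantities $lf(u,v)=w(u,v)|f(u)-f(v)|$ are Euclidean norms of affine expressions rather than plain absolute values, which is precisely why the direct appeal to \cite{Descloux1963} is no longer available. First I would check that $\{f_p\}_p$ is bounded: for any fixed $h\in\Knoten_g(V)$ minimality gives $E_\infty(f_p)\le E_p(f_p)^{1/p}\le E_p(h)^{1/p}\to E_\infty(h)$, so the increments $w(u,v)|f_p(u)-f_p(v)|$ stay uniformly bounded, and since $G$ is connected with $f_p\restriction_U=g$ this bounds $\max_u|f_p(u)|$. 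Hence cluster points exist, and by Lemma~\ref{lem:gamma} every cluster point is a minimizer of $E_\infty$. It therefore suffices to show that an arbitrary cluster point $\bar f=\lim_k f_{p_k}$ equals $f^\ast$; convergence of the whole bounded sequence then follows.

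The core is to prove that $\bar f$ attains the lexicographically smallest sorted edge profile, i.e.\ $\lex(\bar f)=\lex(f^\ast)$; once this is known, Lemma~\ref{lem:wichtig} in the case of full agreement ($k=M$) forces $\bar f=f^\ast$. Since $f^\ast$ is $\lex$ minimal we always have $\lex(f^\ast)\le\lex(\bar f)$, so only strict inequality must be excluded. I would do this by establishing the grouped-norm analogue of the strict uniform approximation theorem for the affine subspace $\{\nabla_w f:f\in\Knoten_g(V)\}$ equipped with the $2,p$-norms, carried out by a layered argument that peels off the maximal edge levels one level at a time, in the spirit of the proofs of Lemma~\ref{lem:wichtig} and Theorem~\ref{thm:coincide}. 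Two ingredients drive this. The first is the elementary fact that for nonnegative $a,b\in\R^M$ whose nonincreasing rearrangements satisfy $a^\downarrow<b^\downarrow$ lexicographically one has $\|a\|_p<\|b\|_p$ for all large $p$, which converts lexicographic order into eventual $\ell_p$ order. The second, where the vector-valued structure really enters, is the strict convexity of the Euclidean norm, namely $|a+b|=|a|+|b|$ with $|a|=|b|$ implies $a=b$; this is used to promote information about the \emph{magnitudes} $lf(u,v)$ to information about the \emph{differences} $f(u)-f(v)$, so that after a level set has been identified the values of $\bar f$ on the incident vertices are determined and may be frozen as boundary data when passing to the next layer.

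The step I expect to be the main obstacle is that no purely soft comparison of energies can succeed. Because $t\mapsto t^p$ amplifies tiny deviations near the top level, the minimizer $f_p$ shaves its largest edge values slightly below the optimal level $\lex_1(f^\ast)$ at a rate of order $1/p$ --- precisely the $a^p/p$ behaviour quantified in \cite{QFMB2005,EH1990} for $m=1$. As a consequence the bare inequality $E_{p_k}(f_{p_k})\le E_{p_k}(f^\ast)$ cannot by itself exclude that $\bar f$ carries more mass at the maximal levels than $f^\ast$, since the shaving exactly absorbs the discrepancy in the energy count. Overcoming this needs the quantitative control of the top layers supplied by the strict uniform approximation estimate, now redone with Euclidean blocks in place of absolute values; this is the step that genuinely replaces the reduction to \cite{Descloux1963} available in the scalar case.

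With the layered selection in hand the conclusion is routine: assuming $\lex(f^\ast)<\lex(\bar f)$ and letting $K$ be the value at the first discrepancy, the quantitative estimate forces the sorted profile of $f_{p_k}$ to align asymptotically with that of $f^\ast$ on the edges with $lf^\ast(u,v)\ge K$, while the strict-convexity step fixes the corresponding differences; freezing these vertices and iterating downward through the finitely many levels yields $\lex(\bar f)=\lex(f^\ast)$ and hence, by Lemma~\ref{lem:wichtig}, $\bar f=f^\ast$. Since every cluster point of the bounded sequence $\{f_p\}_p$ equals the unique minimal Lipschitz extension $f^\ast$, we conclude that $f_p\to f^\ast$ as $p\to\infty$.
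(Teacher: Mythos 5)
There is a genuine gap, and it sits exactly where you placed the weight of the argument. Your reduction (boundedness of $\{f_p\}_p$, cluster points exist, every cluster point minimizes $E_\infty$ by Lemma~\ref{lem:gamma}, so it suffices to show each cluster point equals the $\lex$ minimal extension $f^\ast$, with uniqueness via Lemma~\ref{lem:wichtig}) agrees with the paper. But the core step you propose --- a ``grouped-norm analogue of the strict uniform approximation theorem'' with quantitative control of the top layers, redone with Euclidean blocks in place of absolute values --- is announced, not proved. You yourself identify it as the main obstacle, yet neither of the two ingredients you list closes it: the fact that $a^\downarrow<b^\downarrow$ lexicographically implies $\|a\|_p<\|b\|_p$ for large $p$ compares two \emph{fixed} vectors, which is useless against the $p$-dependent minimizer $f_p$ (precisely because of the shaving phenomenon you describe), and strict convexity of the Euclidean norm only converts equality of magnitudes into equality of differences \emph{after} the top levels of the limit have been pinned down, which is the very thing to be shown. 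The scalar Descloux/P\'olya argument leans on the coordinatewise affine structure of $\mathcal{K}$, and you give no indication of how its level-peeling survives when the entries are Euclidean norms of vector-valued affine expressions; as written, your layered selection is a program whose only nontrivial ingredient is missing.

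Moreover, your structural claim that ``no purely soft comparison of energies can succeed'' is contradicted by the paper's actual proof, which is exactly a soft comparison --- just not against a fixed competitor. Given a cluster point $\tilde f\neq f$ of $\{f_{p_j}\}_j$, with $k$ the largest index of agreement of $(\lex_i(f))_i$ and $(\lex_i(\tilde f))_i$, Lemma~\ref{lem:wichtig} yields $f=\tilde f$ on the set $W$ of vertices incident to an edge with $lf(u,v)\ge\lex_k(f)$. The paper then compares $f_{p_j}$ with the $p$-dependent hybrid $h_{p_j}$ defined as $f_{p_j}$ on $W\cup U$ and as $f$ elsewhere. Because $h_{p_j}$ \emph{copies} $f_{p_j}$ on $W\cup U$, all energy terms on edges inside $W\cup U$ cancel identically in $E_{p_j}(f_{p_j})-E_{p_j}(h_{p_j})$, so the $O(1/p)$ shaving of the top levels never enters the comparison; the surviving edges (those with an endpoint outside $W\cup U$) sit at levels separated by the fixed gap $\delta\coloneqq\lex_{k+1}(\tilde f)-\lex_{k+1}(f)>0$, whence $E_{p_j}(f_{p_j})-E_{p_j}(h_{p_j})\ge \left(\lex_{k+1}(\tilde f)-\tfrac{\delta}{4}\right)^{p_j}-C\left(\lex_{k+1}(f)+\tfrac{\delta}{4}\right)^{p_j}>0$ for large $j$, contradicting the minimality of $f_{p_j}$. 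So the quantitative rate machinery of \cite{QFMB2005,EH1990} that you call for is unnecessary: the agreement-on-$W$ lemma plus a competitor adapted to $f_{p_j}$ suffices, and no vector-valued strict uniform approximation theorem (a substantially harder statement, which your proposal would need in full) is required.
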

\begin{proof}
	The sequence $\{f_{p}\}_p$ is bounded and hence there exists some accumulation point.
	Let $f \in \Knoten_g(V)$ be the minimal extension and assume that there exists a convergent subsequence $\{f_{p_j}\}_j$ of $\{f_{p}\}_p$ with limit $\tilde f \neq f$.
	Choose $k \in \{1,\ldots, M\}$ to be the largest index such that
	$( \lex_i (f) )_{i=1}^k = ( \lex_i (\tilde f))_{i=1}^k$, which exists since $\tilde f$ is a minimizer of $E_\infty$.
	By Lemma \ref{lem:wichtig} the functions $f$ and $\tilde f$ coincide on the set
	$
	W \coloneqq \left\{ u \in V\setminus U\colon \exists v \sim u \quad \text{such that} \quad lf(u,v) \ge \lex_k (f) \right\}.
	$
	Define
	\begin{equation}
	h_p(u)\coloneqq 
	\begin{cases}
	f_p(u) & \mathrm{if} \;  u\in W\cup U,\\
	f(u) &\mathrm{otherwise.}
	\end{cases}
	\end{equation}
	Since $f(u)=\tilde f(u)$ for $u\in W$, we have that $h_{p_j}\to f$ as $j\to \infty$.
	For $u\in W\cup U$ and $v\in W$ with $u\sim v$ it holds 
	\begin{equation}
	w(u,v) |f_{p_j}(u)-f_{p_j}(v)| = w(u,v)|h_{p_j}(u)-h_{p_j}(v)|,
	\end{equation}
	and for $u\in V$ and $v\in V\setminus (W\cup U)$ with $u\sim v$,
	\begin{equation}
	w(u,v) | h_{p_j}(u)-h_{p_j}(v) | \to w(u,v) |f(u)-f(v)| \leq \lex_{k+1}(f).
	\end{equation}
	Setting $\delta\coloneqq \lex_{k+1} (\tilde f)-\lex_{k+1} (f) >0$,
	we obtain, for sufficiently large $j$ and $u\in V$, $v\in V\setminus (W\cup U)$ with $u\sim v,$ that
	\begin{equation}
	w(u,v)|h_{p_j}(u)-h_{p_j}(v)| \leq \lex_{k+1}(f) + \frac\delta4.
	\end{equation}
	On the other hand, there exist $u\in V$ and $v\in V\setminus (W\cup U)$ with $u\sim v$ such that 
	\begin{equation}
	w(u,v)|f_{p_j}(u)-f_{p_j}(v)|\to w(u,v) |\tilde f(u)-\tilde f(v)| = \lex_{k+1}(\tilde f)
	\end{equation}
	so that, for sufficiently large $j$,
	\begin{equation}
	w(u,v)|f_{p_j}(u)-f_{p_j}(v)| \geq \lex_{k+1}(\tilde f) - \frac\delta4.
	\end{equation}
	Then we conclude
	\begin{align*}
	E_{p_j}(f_{p_j})-E_{p_j}(h_{p_j})
	& = \sum_{u \in V} \sum_{v\sim u} w(u,v)^{p_j} \big( |f_{p_j}(v)-f_{p_j}(u) |^{p_j} - | h_{p_j}(v)-h_{p_j}(u)|^{p_j} \big)
	\\
	& = \sum_{u\in W\cup U}\sum_{\substack{v\sim u\\v\not\in (W\cup U)}}w(u,v)^{p_j} \left|f_{p_j}(v)-f_{p_j}(u)\right|^{p_j}\\
	& \quad +\sum_{u\in V\setminus (W\cup U)}\sum_{v\sim u} w(u,v)^{p_j} \left|f_{p_j}(v)-f_{p_j}(u)\right|^{p_j}\\
	& \quad -\sum_{u\in W\cup U}\sum_{\substack{v\sim u\\v\notin (W\cup U)}}w(u,v)^{p_j} \left|h_{p_j}(v)-h_{p_j}(u)\right|^{p_j}\\
	& \quad-\sum_{u\in V\setminus (W\cup U)}\sum_{v\sim u} w(u,v)^{p_j} \left|h_{p_j}(v)-h_{p_j}(u)\right|^{p_j},
	\end{align*}
	and further,
	\begin{align*}
	E_{p_j}(f_{p_j})-E_{p_j}(h_{p_j})
	\geq
	& \left(\lex_{k+1}(\tilde f) - \frac\delta4 \right)^{p_j} - C \left(\lex_{k+1}(f) + \frac\delta4 \right)^{p_j}\\
	=&
	\left( \lex_{k+1}(\tilde f) - \frac\delta4 \right)^{p_j} 
	\left(1- C \left( \frac{ \lex_{k+1}(f) + \frac\delta4 }{ \lex_{k+1}(\tilde f) - \frac\delta4 } \right) ^{p_j} \right).
	\end{align*}
	By the definition of $\delta,$ the quotient is smaller than $1$ and we thus obtain, for sufficiently large $j,$ 
	that $E_{p_j}(f_{p_j})-E_{p_j}(h_{p_j}) > 0$.
	This contradicts that $f_{p_j}$ is the minimizer of $E_{p_j}$ and the proof is complete. 
\end{proof}

\section{Midrange Filters and $\infty$-Harmonic Extensions} \label{sec:midrange+harm}
This section relates midrange filters, $\infty$-harmonic extensions and minimal Lipschitz extensions. 
We then arrive at the Krasnoselskii--Mann algorithm which provides us with an approximation algorithm for minimal Lipschitz extensions. 
Let us start by recalling the concept of midrange filters.

\subsection{Midrange Filters}\label{subsec:midr}
Consider the following minimization problem:
For $x = (x_1,\ldots,x_n)^\tT$, $x_i \in \R^m$ 
and $w = (w_1,\ldots,w_n)^\tT \in (0,1]^n$ with $w_1 \ge \ldots \ge w_n$,
define the \emph{weighted midrange filter} $\midr_w\colon \R^{mN} \to \R^m$ given by
\begin{align}\label{eq:Midrange}
\midr_w x
&= \argmin_{a \in \R^m} \big\{ \max_i \left( w_i |x_i - a| \right) \big\}. 
\end{align}
If $w_i |x_i - a|$ is replaced by $w_i^2 |x_i - a|^2$, $i=1,\ldots,n$, the minimizer remains the same. 
As the pointwise maximum of strongly convex functions with modulus $2 w_i^2$,
the  functional $\max_i \left( w_i^2 |x_i - a|^2 \right)$ is strongly convex with modulus $2 w_n^2$.
Hence $\midr_w x$ is uniquely determined. 

In the scalar-valued case, that is $m=1$, in was shown in \cite[Theorem~5]{Obe05} that the midrange filter can be expressed as
\begin{equation}\label{eq:ScalarMidr}
\midr_w x =  \frac{w_i x_i + w_j x_j}{w_i + w_j} , \qquad
(i,j) \in \argmax_{k,l} \frac{\vert x_k - x_l\vert}{1/w_k + 1/w_l},
\end{equation}
which further simplifies to
\begin{equation*}
\midr_w x =  \tfrac12 \Bigl(\min_i x_i + \max_i x_i\Bigr),
\end{equation*}
for the non-weighted filter.

Note that we call a midrange filter from the signal processing point of view, 
is  quite classic and can be found in the literature under various names including the \emph{smallest circle/bounding sphere center,} or \emph{Chebyshev center,} or \emph{circumcenter}.
The problem is known to be solvable by an $\mathcal{O}(n)$ linear programming algorithm, where the factor in the $\mathcal{O}(n)$ term depends sub-exponentially 
on the dimension $m;$ see~\cite{MSW1996}.
For a comparison of several algorithms, see~\cite{XuFrSu03}.

The minimization problem \eqref{eq:Midrange} can be generalized from $\R^m$ into Hadamard spaces (see \cite[Example 2.2.18]{B2014}), which is of interest when 
one works, for instance, with the Hadamard manifold of symmetric positive definite matrices; see~\cite{WWBSFB07}.
\begin{lemma}\label{lem:lip}
	For scalar-valued functions, that is, $m=1$, the operator $\midr_{w}$ is Lipschitz continuous
	with constant $L \le 1$ with respect to the $\infty$-norm.
\end{lemma}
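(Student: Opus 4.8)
The plan is to deduce the Lipschitz estimate from two elementary structural properties of the scalar midrange operator, namely \emph{translation equivariance},
\[
\midr_w(x + c\mathbf{1}) = \midr_w(x) + c \qquad (c \in \R,\ \mathbf{1} = (1,\dots,1)^\tT),
\]
and \emph{monotonicity}, i.e.\ $\midr_w(x) \le \midr_w(y)$ whenever $x \le y$ componentwise. Granting these two facts, the assertion follows by a standard sandwiching argument: writing $\delta \coloneqq \|x - y\|_\infty$ we have $y - \delta\mathbf{1} \le x \le y + \delta\mathbf{1}$ componentwise, so monotonicity together with translation equivariance yields
\[
\midr_w(y) - \delta \;=\; \midr_w(y - \delta\mathbf{1}) \;\le\; \midr_w(x) \;\le\; \midr_w(y + \delta\mathbf{1}) \;=\; \midr_w(y) + \delta,
\]
whence $|\midr_w(x) - \midr_w(y)| \le \delta = \|x-y\|_\infty$, which is exactly $L \le 1$.

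It remains to prove the two structural properties, and here the decisive step---and the main obstacle---is monotonicity. My plan for it is to replace the nonsmooth minimization \eqref{eq:Midrange} by a crossing characterization. For scalar $x$ set
\[
P_x(a) \coloneqq \max_i w_i (x_i - a), \qquad Q_x(a) \coloneqq \max_i w_i (a - x_i),
\]
so that $\max_i w_i|x_i - a| = \max\{P_x(a), Q_x(a)\}$. Since each $w_i > 0$, the function $P_x$ is strictly decreasing and $Q_x$ strictly increasing in $a$; hence the strictly decreasing difference $P_x - Q_x$ has a unique zero, to the left of which $P_x \ge Q_x$ and to the right of which $Q_x \ge P_x$. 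Consequently $\max\{P_x, Q_x\}$ is decreasing up to that point and increasing afterwards, so $\midr_w(x)$ is precisely the unique $a$ solving $P_x(a) = Q_x(a)$.

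With this characterization monotonicity is immediate. If $x \le y$, then termwise $P_x \le P_y$ and $Q_x \ge Q_y$; evaluating at $a \coloneqq \midr_w(x)$ gives $P_y(a) \ge P_x(a) = Q_x(a) \ge Q_y(a)$, so $a$ lies weakly to the left of the crossing point of $P_y$ and $Q_y$, i.e.\ $a \le \midr_w(y)$. Translation equivariance is even simpler: the substitution $x \mapsto x + c\mathbf{1}$ turns $P_x(a), Q_x(a)$ into $P_x(a - c), Q_x(a-c)$, which shifts the unique crossing point by $c$. I expect no serious difficulty beyond setting up the crossing characterization cleanly; in particular the degenerate case in which all $x_i$ coincide is covered automatically, since then $P_x$ and $Q_x$ already cross at $a = x_1$.
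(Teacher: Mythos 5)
Your proof is correct, and it takes a genuinely different route from the paper's. The paper works from Oberman's explicit two-point formula \eqref{eq:ScalarMidr}: it parametrizes the segment $x(t)=(1-t)x+ty$, uses that the maximizing pair $(i,j)$ in \eqref{eq:ScalarMidr} is piecewise constant along this segment, and telescopes the increments of $\midr_w$ over the subintervals via the triangle inequality, so the Lipschitz bound comes out of the weighted-average structure $\frac{w_i x_i + w_j x_j}{w_i + w_j}$. You instead prove nonexpansiveness abstractly from two order-theoretic properties, monotonicity and translation equivariance, in the spirit of the classical Crandall--Tartar observation that any monotone, additively equivariant map is $\ell_\infty$-nonexpansive; the only midrange-specific work is your crossing characterization of $\midr_w(x)$ as the unique balance point $P_x(a)=Q_x(a)$, which is clean and fully verified (strict monotonicity of $P_x$, $Q_x$ gives both existence/uniqueness of the crossing and the fact that it is the minimizer, so you even recover uniqueness of the scalar midrange without the paper's strong-convexity argument). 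What each approach buys: yours is more self-contained (no reliance on \cite{Obe05}) and avoids the path-splitting bookkeeping in the paper's proof, which is slightly informal as written (it implicitly uses continuity of $\midr_w x(t)$ at the breakpoints $t_k$, and the sums over $k$ should run to $K$, not $n$); moreover your template would apply verbatim to any monotone translation-equivariant filter. The paper's route, on the other hand, exhibits the explicit extreme-pair representation, which is reused elsewhere and carries finer quantitative information about which data points control the output. Both arguments are intrinsically scalar: your monotonicity has no analogue for $m\ge 2$, where $\R^m$ carries no suitable order, consistent with the failure of Lipschitz continuity noted before Lemma \ref{lem:hoelder}.
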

\begin{proof}
	Given $x,y \in \R^n,$ consider the line segment $x(t) \coloneqq (1-t) x + t y$, where $t \in [0,1]$. 
	Then the functions $L_{i,j}\colon [0,1] \to \R$, where $i,j =1,\ldots,n,$ defined by 
	\[L_{i,j}(t) = \frac{\vert x_i(t) - x_j(t) \vert}{1/w_i + 1/w_j}\]
	are piecewise linear and hence 
	$L(t) \coloneqq \sup_{i,j} L_{i,j}(t)$ is also piecewise linear. 
	Therefore, $[0,1]$ can be split into a finite number of intervals 
	$[t_{k-1},t_k]$, $k=1,\dots,K$ with $t_0 \coloneqq 0$ and $t_K \coloneqq 1$ 
	with corresponding maximizing indices $(i_{k}, j_{k})$ on $[t_{k-1},t_k]$, $k=1,\ldots,K$. 
	By the triangle inequality and \eqref{eq:ScalarMidr}, we get
	\begin{align}
	\bigl \vert \midr_{w} x - \midr_{w} y \bigr \vert
	&\leq \sum_{k=1}^K \bigl\vert \midr_{w} x(t_{k-1}) - \midr_{w} x(t_k) \bigr \vert\\
	&= 
	\sum_{k=1}^n \left\vert \frac{w_{i_k} x_{i_k}(t_{k-1})+ w_{j_k} x_{j_k}(t_{k-1})}{w_{i_k} + w_{j_k}}
	- \frac{ w_{i_k} x_{i_k}(t_{k}) + w_{j_k} x_{j_k}(t_{k})}{w_{i_k} + w_{j_k}}\right\vert\\
	&= 
	\sum_{k=1}^n \biggl\vert \frac{w_{i_k} \bigl(x_{i_k}(t_{k-1})-x_{i_k}(t_{k})\bigr) 
		+ w_{j_k} \bigl(x_{j_k}(t_{k-1}) - x_{j_k}(t_{k})\bigr)}{w_{i_k} + w_{j_k}}\biggr\vert.
	\end{align}
	Finally, the definition of $x(t)$ implies
	\begin{equation*}
	\bigl \vert \midr_{w} x - \midr_{w} y \bigr \vert
	\leq \sum_{k=1}^n \frac{w_{i_k}(t_{k} - t_{k-1})}{w_{i_k} + w_{j_k}} 
	\Vert x - y \Vert_\infty + \frac{w_{j_k}(t_{k} - t_{k-1})}{w_{i_k} + w_{j_k}} \Vert x - y \Vert_\infty\\
	= \Vert x - y \Vert_\infty,
	\end{equation*}
	which concludes the proof.
\end{proof}

Unfortunately, the operator $\midr_{w}$ is not Lipschitz in dimensions $m \ge 2$. As a matter of fact, one can find a counterexample already in the non-weighted case for $m=2$ and $n=3.$
In general we can only show that $\midr_{w}$ is locally $\frac{1}{2}$-H\"older continuous.
\begin{lemma} \label{lem:hoelder}
	The operator $\midr_{w}$ is locally $\frac{1}{2}$-H\"older continuous.
\end{lemma}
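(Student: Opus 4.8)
The plan is to exploit the strong convexity already isolated in the text. For an input $x = (x_1,\dots,x_n)$ set $F_x(a) \coloneqq \max_i \bigl( w_i^2 |x_i - a|^2 \bigr)$; by the discussion preceding the lemma this is strongly convex with modulus $2w_n^2$, and its unique minimizer is $a^\ast \coloneqq \midr_w x$ (the minimizer being unchanged when $w_i|x_i-a|$ is replaced by $w_i^2|x_i-a|^2$). Since $a^\ast$ minimizes $F_x$ we have $0 \in \partial F_x(a^\ast)$, so strong convexity delivers the quadratic growth estimate
\[
F_x(a) \ge F_x(a^\ast) + w_n^2 \, |a - a^\ast|^2 \qquad \text{for all } a \in \R^m.
\]
This is the engine of the argument: it converts control of the objective gap into control of the distance between minimizers.

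First I would localize. Fix $R>0$ and restrict to inputs with $|x_i| \le R$ for all $i$. Then $F_x(0) \le R^2$, so the minimal value is at most $R^2$, while $F_x(a) \ge w_1^2(|a|-R)^2$ for $|a|\ge R$ shows that $F_x$ is coercive; hence $a^\ast$ lies in a ball of radius $C_R'$ depending only on $R$ and the weights. The same holds for a second input $y$ with minimizer $b^\ast \coloneqq \midr_w y$, so all of $|x_i - a^\ast|,\,|y_i - a^\ast|,\,|x_i - b^\ast|,\,|y_i - b^\ast|$ are bounded by a constant $C_R$. Applying the growth estimate to $F_x$ at $b^\ast$ and to $F_y$ at $a^\ast$ and adding the two inequalities gives
\[
2 w_n^2 \, |a^\ast - b^\ast|^2 \le \bigl( F_x(b^\ast) - F_y(b^\ast) \bigr) + \bigl( F_y(a^\ast) - F_x(a^\ast) \bigr).
\]
It remains to bound the objective perturbation at the two points $a \in \{a^\ast, b^\ast\}$. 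Since $\max_i$ is $1$-Lipschitz, and factoring the difference of squares with the reverse triangle inequality and $w_i \le 1$,
\[
|F_x(a) - F_y(a)| \le \max_i w_i^2 \bigl| |x_i - a|^2 - |y_i - a|^2 \bigr| \le C_R \, \Norm{x - y}{2,\infty},
\]
where the last step uses $\bigl||x_i-a|-|y_i-a|\bigr| \le |x_i-y_i| \le \Norm{x-y}{2,\infty}$ together with the boundedness of $|x_i-a|+|y_i-a|$ established above. Combining the two displays yields $|a^\ast - b^\ast|^2 \le (C_R/w_n^2)\,\Norm{x-y}{2,\infty}$, and taking square roots gives the asserted local $\tfrac12$-Hölder bound.

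The main obstacle is precisely the factor $|x_i - a| + |y_i - a|$ surfacing from the difference-of-squares estimate: it admits no uniform bound over all of $\R^{mn}$, which is exactly why only a \emph{local} constant is available and why the genuine Lipschitz bound of Lemma \ref{lem:lip} is special to $m=1$. The remaining work — making the coercivity bound on the minimizers quantitative and tracking how $C_R$ and $w_n$ enter the final constant — is routine bookkeeping; conceptually, it is the mismatch between the quadratic growth of $F_x$ near its minimizer and the merely linear perturbation $|F_x - F_y| \lesssim \Norm{x-y}{2,\infty}$ that forces the exponent $\tfrac12$ in place of $1$.
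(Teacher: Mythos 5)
Your proof is correct and takes essentially the same route as the paper's: both exploit the strong convexity (modulus $2w_n^2$) of the squared max objective to obtain quadratic growth at the minimizer, play it against a perturbation bound on the objective that is linear in $\Norm{x-y}{2,\infty}$, and thereby force the exponent $\tfrac12$. The differences are cosmetic --- you symmetrize by adding the two growth inequalities and bound the perturbation of the \emph{squared} objective via a difference of squares, whereas the paper assumes w.l.o.g.\ $R(x,\hat x)\ge R(y,\hat y)$, perturbs the unsquared objective by $w_1\Norm{x-y}{2,\infty}$, and factors afterwards; your coercivity-based localization of the minimizers is, if anything, a more careful justification than the paper's asserted bound $|\hat x-\hat y|\le C_r$.
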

\begin{proof}
	Let $x,y \in \R^{nm}$ with distance $\|x - y\|_{2,\infty} \le r$ be given.
	Then $\hat x \coloneqq  \midr_w x$ and $\hat y \coloneqq  \midr_w y$ fulfill 
	$|\hat x - \hat y| \le C_r$
	with a constant $C_r$ depending only on $r$.
	Set 	
	$R(x,a) \coloneqq \max_i \left( w_i |x_i - a| \right)$. 
	Without loss of generality we can and do assume $R(x,\hat x) \ge R(y,\hat y)$. We then obtain
	\begin{equation}\label{ab1}
	R(x,\hat y)\leq R(y,\hat y) + w_{1}\Vert x - y\Vert_{2,\infty}
	\le
	R(x,\hat x) + w_{1}\Vert x - y\Vert_{2,\infty}. 
	\end{equation}
	Since the function $R(x,\cdot)^2$ is strongly convex with parameter $2w_{n}^2$ we have
	\begin{equation*}
	R(x,\hat x)^2 + w_{n}^2 |\hat x - \hat y|^2 \leq R(x,\hat y)^2,
	\end{equation*}
	which together with \eqref{ab1} this results in
	\begin{equation*}
	\Vert x - y\Vert_{2,\infty} \geq \frac{\sqrt{R(x,\hat x)^2 + w_{n}^2 |\hat x - \hat y|^2} 
		- R(x,\hat x)}{w_1} 
	= 
	\frac{w_{n}^2 \vert \hat x - \hat y\vert^2}{ w_{1}\bigl(\sqrt{R(x,\hat x)^2 + w_{n}^2 |\hat x - \hat y|^2} 
		+ R(x,\hat x)\bigr)},
	\end{equation*}
	and we arrive at
	\begin{equation*}
	|\hat x - \hat y|^2 \le \frac{w_{1}}{w_{n}^2} \, \bigl(\sqrt{R(x,\hat x)^2 + w_{n}^2 C_r^2} + R(x,\hat x)\bigr) \, \Vert x - y\Vert_{2,\infty}.
	\end{equation*}
	Hence, $\midr_{w}$ is locally $\frac{1}{2}$-H\"older continuous.
\end{proof}

\begin{remark}\label{rem:Lipschitz}
	For the non-weighted case, a detailed discussion on continuity properties of midrange filters can be found in \cite{IvaSos08}. 
	In particular, it was proved that for pairwise different $x_i \in \R^m$,
	there exists a ball around $x \in \R^{mn}$ such that the midrange operator is Lipschitz continuous on this ball.
\end{remark}
\subsection{$\infty$-Harmonic Extensions}
%
Next, we are interested in applying the midrange filter to given functions
$f\in \Knoten_g(V)$.
Define the operator
$\Midr_w\colon \Knoten_g(V) \to \Knoten_g(V)$  by
\begin{align}
\Midr_w f(u) 
&\coloneqq 
\midr_{w(u)} x(u) \\
&\, =\argmin_{a\in \R^m} \left\{ \max_{v\sim u} w(u,v) |f(v)-a| \right\}, \qquad u \in V \setminus U,
\end{align}
where
$x(u) \coloneqq \left( f(v) \right)_{v \sim u}$  and $w(u) \coloneqq \left( w(u,v) \right)_{v \sim u}$.
A function $f \in \Knoten_g(V)$ is called an \emph{$\infty$-harmonic extension} of $g$ if it is a fixed point of $\Midr_w$, i.e.,
\begin{equation}
f = \Midr_w f.
\end{equation}
The relation between $\infty$-harmonic extensions and  minimal Lipschitz ones is given by the following lemma.
The first part of the lemma also guarantees the existence of $\infty$-harmonic extensions.
\begin{lemma}\label{lem:lip=harm}
	Let $f\in \Knoten_g(V).$
	\begin{enumerate}
		\item \label{harm:i} If $f$ is a minimal Lipschitz extension, then it is $\infty$-harmonic.
		\item \label{harm:ii} For $m=1$,  if $f$ is an $\infty$-harmonic extension, then it is uniquely determined 
		and coincides therefore with the minimal Lipschitz extension.
	\end{enumerate}
\end{lemma}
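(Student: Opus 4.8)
The plan is to treat the two parts separately, and for part \eqref{harm:i} to argue by contradiction using the $\Llex$-characterization of the minimal Lipschitz extension from Theorem \ref{thm:coincide}. Suppose $f$ is minimal Lipschitz but not a fixed point of $\Midr_w$, and pick $u\in V\setminus U$ with $f(u)\neq a^\ast\coloneqq\Midr_w f(u)$; set $c\coloneqq Lf(u)$. Replacing the value at $u$ by $a^\ast$ yields $\tilde f\in\Knoten_g(V)$, and because $a^\ast$ is the \emph{unique} minimizer of the strongly convex map $a\mapsto\max_{v\sim u}w(u,v)|f(v)-a|$, every edge incident to $u$ satisfies $l\tilde f(u,v)\le\max_{v'\sim u}w(u,v')|f(v')-a^\ast|<c$, so in particular $L\tilde f(u)<c$. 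The key observation is the edge symmetry $lf(u,v)=lf(v,u)\le c$: at any neighbour $w$ of $u$, if $Lf(w)>c$ then the maximum defining $Lf(w)$ is attained by an edge \emph{not} incident to $u$, hence unchanged, so $L\tilde f(w)=Lf(w)$; and no $L$-value is pushed up to or beyond $c$. Thus $\Llex(f)$ and $\Llex(\tilde f)$ agree on all entries exceeding $c$, while the block of entries equal to $c$ strictly shrinks (at least $u$ leaves it), giving $\Llex(\tilde f)<\Llex(f)$ — the desired contradiction.

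For part \eqref{harm:ii} with $m=1$, existence of an $\infty$-harmonic extension is already supplied by part \eqref{harm:i}, so it suffices to prove uniqueness; the coincidence with the minimal Lipschitz extension then follows. First I would recast scalar $\infty$-harmonicity at an interior vertex as a discrete infinity-Laplace equation: using the balancing form of the weighted scalar midrange in \eqref{eq:ScalarMidr}, the identity $f(u)=\midr_{w(u)}(f(v))_{v\sim u}$ is equivalent to
\[
\max_{v\sim u} w(u,v)\bigl(f(v)-f(u)\bigr)+\min_{v\sim u} w(u,v)\bigl(f(v)-f(u)\bigr)=0 .
\]
In particular every $\infty$-harmonic $f$ is simultaneously a subsolution ($\ge 0$) and a supersolution ($\le 0$) of this operator, which is exactly the setting for a comparison argument.

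I would then prove a comparison principle. Let $f,f'$ be two $\infty$-harmonic extensions and assume $M\coloneqq\max_V(f-f')>0$. Since $f=f'=g$ on $U$, the argmax set $S$ is a nonempty subset of $V\setminus U$. For $u\in S$ one has $w(u,v)\bigl(f(v)-f(u)\bigr)\le w(u,v)\bigl(f'(v)-f'(u)\bigr)$ for every neighbour, with equality exactly when $v\in S$; inserting this into the subsolution inequality for $f$ and the supersolution inequality for $f'$ squeezes the chain into equalities, whence the neighbour attaining $\max_{v\sim u} w(u,v)(f(v)-f(u))$ again lies in $S$. Now take $u_0\in\argmax_{u\in S} f(u)$: maximality forces this top weighted slope to vanish, so all neighbours of $u_0$ belong to $S$ and share its $f$-value. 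Hence $S_0\coloneqq\argmax_{u\in S} f(u)$ is closed under taking neighbours, and connectedness of $G$ forces $S_0=V$, contradicting $S_0\subseteq V\setminus U\subsetneq V$. Therefore $f\le f'$, and exchanging the roles of $f$ and $f'$ gives $f=f'$.

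The step I expect to be most delicate in part \eqref{harm:i} is the bookkeeping of how a single-vertex change affects \emph{every} $L$-value: the argument lives or dies on the symmetry $lf(u,v)=lf(v,u)\le Lf(u)$, which prevents any neighbour's $L$-value from being inflated past $c$. In part \eqref{harm:ii} the crux is the propagation step — converting harmonicity into the $\Delta_\infty$-form and tightening the sub/supersolution inequalities into equalities so that the maximal set $S$ spreads along edges — with the subtle point being the vertex maximizing $f$ over $S$, where one must exclude a strictly larger neighbour and thereby force the binding slope to vanish.
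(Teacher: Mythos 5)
Your proof is correct. For part \eqref{harm:i} you follow essentially the same route as the paper: perturb $f$ at a single vertex $u$ by replacing $f(u)$ with $\Midr_w f(u)$, use the uniqueness of the midrange minimizer (strong convexity) to get $L\tilde f(u)<Lf(u)=c$, and exploit the edge symmetry $lf(u,v)=lf(v,u)\le c$ to verify that no other $L$-value is raised to or above $c$, whence $\Llex(\tilde f)<\Llex(f)$, contradicting the $\Llex$-minimality guaranteed by Theorem \ref{thm:coincide}; the paper's bookkeeping (every neighbour whose $L$-value increases satisfies $Lh(u)\le Lh(u_0)<Lf(u_0)$) is just a variant of your ``entries above $c$ are frozen, the block at $c$ strictly shrinks'' argument. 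For part \eqref{harm:ii} you genuinely diverge from the paper, which disposes of it in one line by invoking Theorem \ref{inf-lap=inf-harm}, whose uniqueness ingredient is in turn imported from the literature (the paper cites \cite{EDLL2014} for uniqueness of the zero of $\Delta_{w,\infty}$ on weighted graphs, and \cite{PSSW2005} for the unweighted case). You instead reprove uniqueness from scratch via a discrete comparison principle: recast scalar harmonicity through \eqref{eq:ScalarMidr} as the balance equation, show the argmax set $S$ of $f-f'$ propagates along the binding edge (from termwise domination plus $A+a=0=A'+a'$ one gets $A=A'$, $a=a'$, forcing equality at the maximizing neighbour), and let the $f$-maximal subset $S_0\subseteq S$ swallow $V$ by connectedness, contradicting $S_0\subseteq V\setminus U$ with $U\neq\emptyset$. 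This argument is sound; the one step worth making explicit is the nonnegativity of the top slope $A=\max_{v\sim u_0}w(u_0,v)\bigl(f(v)-f(u_0)\bigr)$, which you need in order to force $A=0$ at $u_0$ — it does hold, either from the paper's standing assumption $w(u,u)>0$ for $u\in V\setminus U$ (the self-loop contributes the value $0$ to the maximum), or even without self-loops from $2A\ge A+a=0$. The trade-off: the paper's proof is shorter but rests on an external citation and on Theorem \ref{inf-lap=inf-harm}, which only appears in the following section, while your version makes part \eqref{harm:ii} self-contained and independent of Section \ref{sec:inf_lap}; your comparison-principle argument is essentially the one behind the cited uniqueness results.
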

\begin{proof}
	\eqref{harm:i} Let $f \in \Knoten_g(V)$ be the minimal Lipschitz extension.
	Assume that $f$ is not $\infty$-harmonic. 
	Then there exist $u_0 \in V \setminus U$ such that
	\begin{equation*}
	f(u_0) \neq a_0 \coloneqq \argmin_{a \in \R^m} \max_{v \sim u_0} w(u_0,v) \l| a - f(v)\r|.
	\end{equation*}
	Define $h \in \Knoten_g(V)$ as
	\begin{equation*}
	h(u) \coloneqq \left\{
	\begin{array}{ll}
	f(u)& \mathrm{if} \; u \neq u_0,\\
	a_0 & \mathrm{if} \; u = u_0.
	\end{array}
	\right.
	\end{equation*}
	Then we have $Lf(u_0) > Lh(u_0)$ and $Lf(u) = Lh(u)$ for all $u \in V \setminus U$
	with $u \not \sim u_0$.
	Assume that $Lf(u) < Lh(u)$ for some $u \neq u_0$, $u \sim u_0$.
	Since
	\begin{equation*}
	w(u,v) |f(u) - f(v)| = w(u,v) |h(u) - h(v)|,
	\end{equation*}
	for every $v \sim u$, $v \neq u_0$, we have
	$u_0 = \argmax_{v \sim u} w(u,v) |h(u) - h(v)|$.
	Thus, $Lh(u) \le Lh(u_0)$. 
	Now we get
	\begin{align}
	\max \left\{ Lf(u)\colon u \in V\setminus U, \, Lf(u) > Lh(u) \right\} 
	&\ge 
	Lf(u_0) > Lh(u_0)\\
	&\ge 
	\max \left\{ Lf(u)\colon u \in V\setminus U, \, Lh(u) > Lf(u) \right\} .
	\end{align}
	This yields the contradiction $\Llex(h) < \Llex(f)$.
	
	\eqref{harm:ii} Follows from Theorem~\ref{inf-lap=inf-harm}. For the non-weighted case, see also~\cite{PSSW2005}.
\end{proof}

The second statement of the lemma is in general not true for $m \ge 2$.
Moreover, in the vector-valued case, an $\infty$-harmonic extension $f\in \Knoten_g(V)$ is not necessarily a minimizer of $E_\infty$.
An example is given in Fig.~\ref{fig:disc_inf-harm_not_unique}, and a more sophisticated one in \cite{SS2012}.
Also, if an $\infty$-harmonic extension is a minimizer of $E_\infty$ it doesn't have to be Lipschitz minimal.
\begin{figure}[t]
	\centering
	\includegraphics[width=0.27\linewidth]{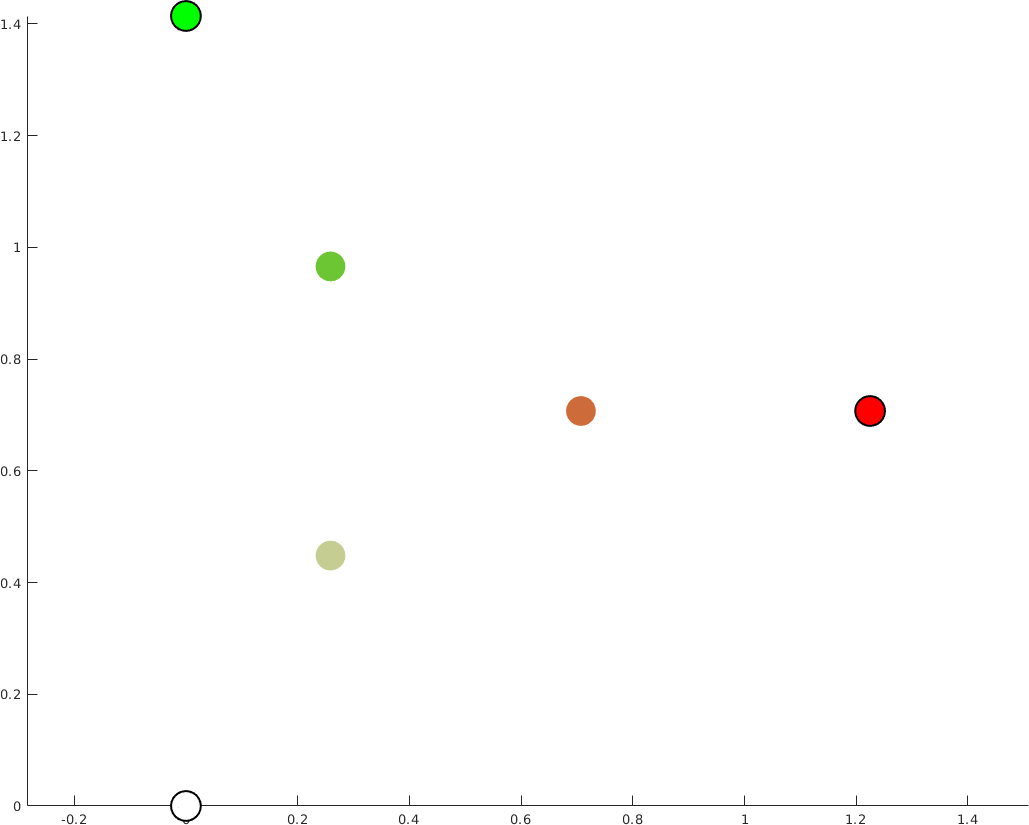}
	\hskip.5em
	\includegraphics[width=0.27\linewidth]{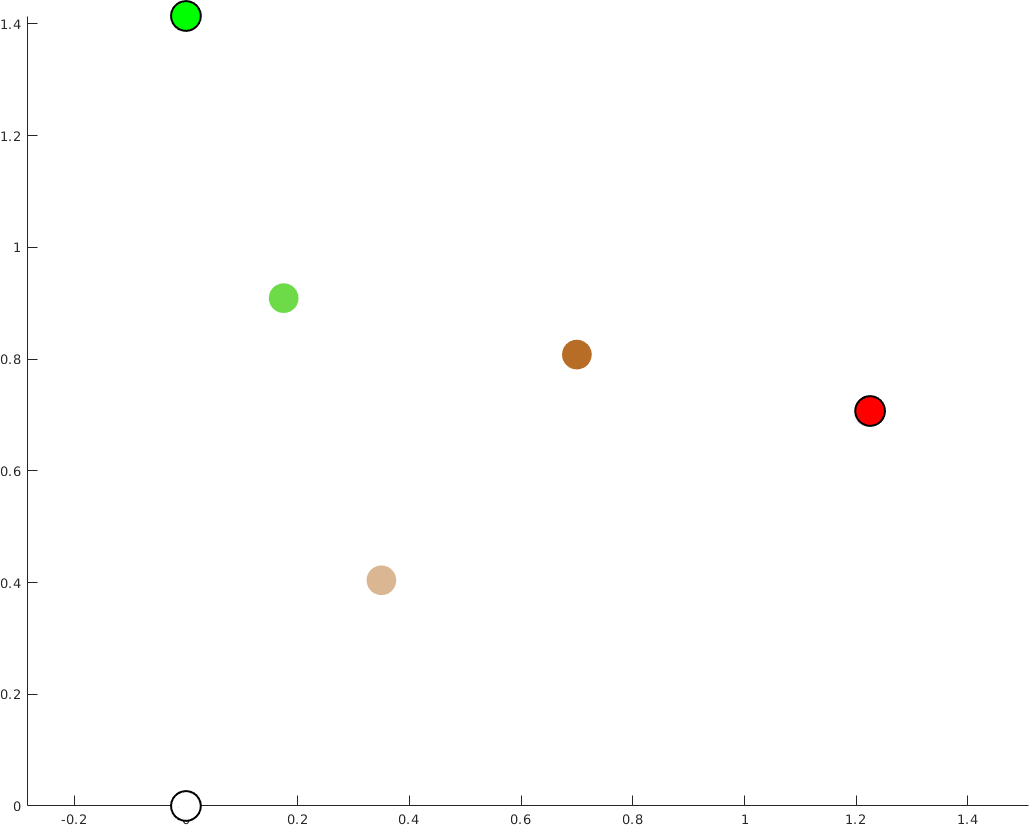}
	\hskip.5em
	\includegraphics[width=0.27\linewidth]{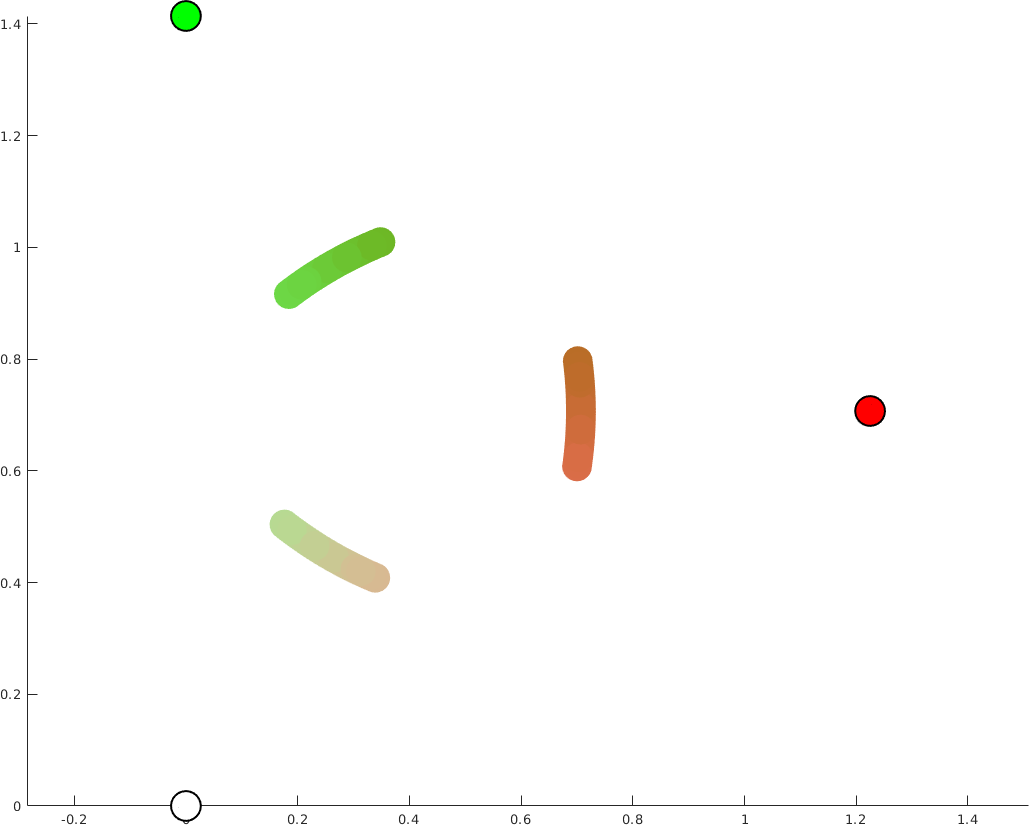}
	\caption{Discrete $\infty$-harmonic extensions of the points $(0,0,0)$ (white), 
		$(1,0,0)$ (red) and $(0,1,0)$ (green) in the RGB color cube visualized in the triangle plane.
		Left: minimal Lipschitz extension, Middle: some $\infty$-harmonic extension, Right: set of all $\infty$-harmonic extensions.}
	\label{fig:disc_inf-harm_not_unique}
\end{figure}
For the scalar-valued case, we can use the fact that $\Midr_w$ is nonexpansive to deduce an iterative algorithm 
for computing the minimal Lipschitz extension of $g$. 
To this end, we use  Krasnoselskii--Mann iterations. 
This method is in general known to converge in Hilbert spaces, but cannot be generalized to Banach spaces without additional constraints.
Fortunately, the Krasnoselskii--Mann method works for finite dimensional normed spaces~\cite[Corollaries~10,11]{BoReSh92}.
\begin{theorem}[Krasnoselskii--Mann Iteration] \label{thm:KM}
	Let $T \colon \R^N \to \R^N$ be a nonexpansive mapping
	with nonempty fixed point set, where $\R^N$ is equipped with an arbitrary norm.
	Then, for every starting point $f^{(0)} \in \R^N$, the sequence of iterates $\{f^{(r)} \}_{r \in \mathbb N}$ generated by
	\begin{equation*}
	f^{(r+1)} \coloneqq \left( (1-\tau_r) I + \tau_r T \right) f^{(r)}, \qquad \tau_r \in (0,1),
	\end{equation*}
	converges to a fixed point of $T$ provided that $\sum_{r=1}^\infty \tau_r = \infty$ and $\limsup_{r \to \infty} \tau_r < 1$. 
\end{theorem}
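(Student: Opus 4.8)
The plan is to treat the scheme as a Fej\'er-monotone sequence for the nonexpansive averaged operators $T_r \coloneqq (1-\tau_r) I + \tau_r T$ and then combine asymptotic regularity with a compactness argument available in finite dimensions. First I would record the structural facts. Since $I$ and $T$ are both nonexpansive and $\tau_r \in (0,1)$, each $T_r$ is nonexpansive as a convex combination of nonexpansive maps; moreover $T_r p = p$ holds exactly when $Tp = p$, so all the $T_r$ share the same nonempty fixed point set $F \coloneqq \mathrm{Fix}(T)$. Fixing any $p \in F$, the identity $f^{(r+1)} - p = T_r f^{(r)} - T_r p$ together with nonexpansiveness of $T_r$ yields
\begin{equation*}
\| f^{(r+1)} - p \| \le \| f^{(r)} - p \|,
\end{equation*}
so $\{\| f^{(r)} - p \|\}_r$ is nonincreasing, hence convergent; in particular $\{ f^{(r)} \}_r$ is bounded and $T$ maps its orbit into a bounded set.

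Next I would establish \emph{asymptotic regularity}, i.e.\ $d_r \coloneqq \| f^{(r)} - T f^{(r)} \| \to 0$. A short triangle-inequality computation based on $f^{(r+1)} - Tf^{(r)} = (1-\tau_r)(f^{(r)} - Tf^{(r)})$ and the nonexpansiveness of $T$ gives $d_{r+1} \le (1-\tau_r)d_r + \tau_r d_r = d_r$, so $\{d_r\}_r$ is nonincreasing and converges to some $d \ge 0$. The hard part will be to prove $d = 0$; this is the main obstacle of the whole argument. Here I would invoke the classical asymptotic-regularity lemma of Ishikawa for nonexpansive maps, whose engine is the Goebel--Kirk inequality, and which guarantees $d_r \to 0$ for bounded orbits whenever $\sum_r \tau_r(1-\tau_r) = \infty$. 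The assumptions $\limsup_{r\to\infty}\tau_r < 1$ and $\sum_r \tau_r = \infty$ produce exactly this divergence: the first gives $\tau_r \le b < 1$ for all large $r$, so $\tau_r(1-\tau_r) \ge (1-b)\tau_r$ eventually, and the second makes the tail sum infinite. A positive limit $d$ would then force a weighted sum of the $d_k$ to diverge while the orbit remains bounded, a contradiction. Note that this step is purely metric and does not yet use finite dimensionality.

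Finally I would convert asymptotic regularity into genuine convergence, and this is precisely where finite dimensionality enters. Since $\{ f^{(r)} \}_r$ is bounded and $\R^N$ is finite dimensional, there is a subsequence $f^{(r_k)} \to f^*$. Continuity of $T$ together with $d_{r_k} \to 0$ gives $\| f^* - Tf^* \| = \lim_k d_{r_k} = 0$, so $f^* \in F$. Applying the Fej\'er-monotonicity step with $p = f^*$ shows that $\{\| f^{(r)} - f^* \|\}_r$ converges, and since it has a subsequence tending to $0$, the entire sequence tends to $0$; hence $f^{(r)} \to f^* \in F$. I expect the only genuine difficulty to be the asymptotic-regularity estimate, the remaining steps being routine. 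It is exactly the last compactness argument that breaks down in general infinite-dimensional Banach spaces (where asymptotic regularity still holds but the iterates need only converge weakly under extra geometric hypotheses), which explains why the finite-dimensionality assumption is essential to the statement.
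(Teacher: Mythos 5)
Your argument is correct, but there is nothing in the paper to compare it against step by step: the paper offers no proof of this theorem at all, importing the statement wholesale from Borwein, Reich and Shafrir (Canad.\ Math.\ Bull., 1992, Corollaries 10 and 11). What you have reconstructed is the standard proof architecture behind that citation: (i) each averaged map $T_r=(1-\tau_r)I+\tau_r T$ is nonexpansive with $\mathrm{Fix}(T_r)=\mathrm{Fix}(T)$ (using $\tau_r>0$), whence Fej\'er monotonicity of $\{\|f^{(r)}-p\|\}_r$ for any fixed point $p$ and boundedness of the orbit; (ii) asymptotic regularity $\|f^{(r)}-Tf^{(r)}\|\to 0$, which you correctly single out as the only hard step and delegate to Ishikawa's lemma --- your verification that $\limsup_r \tau_r<1$ together with $\sum_r\tau_r=\infty$ forces $\sum_r\tau_r(1-\tau_r)=\infty$ (eventually $\tau_r(1-\tau_r)\ge(1-b)\tau_r$) is exactly right, and since only the tail matters one may restart the iteration past the index where $\tau_r\le b<1$; your monotonicity computation $d_{r+1}\le(1-\tau_r)d_r+\|f^{(r)}-f^{(r+1)}\|=d_r$ is also correct; (iii) finite-dimensional compactness extracts a subsequential limit $f^*$, which is a fixed point by continuity of $T$ and $d_{r_k}\to0$, and Fej\'er monotonicity applied with $p=f^*$ upgrades subsequential to full convergence. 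All three steps are sound, and your closing remark correctly locates the only place finite-dimensionality enters (step (iii)), which is indeed why the scheme yields only weak convergence in general Banach or Hilbert settings. The one caveat is that your proof is self-contained only modulo the Ishikawa asymptotic-regularity lemma, which you invoke rather than prove; that is a legitimate classical reference, valid in arbitrary normed spaces for bounded orbits, and the Borwein--Reich--Shafrir corollaries that the paper cites rest on results of precisely this type --- so in effect your proposal makes explicit the proof the paper leaves entirely to the literature. (A minor attribution quibble: the key inequality driving the lemma is Ishikawa's own, with the sharp quantitative form due to Baillon and Bruck; ``Goebel--Kirk'' usually names the constant-stepsize uniform version. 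Nothing in your argument depends on this.)
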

We apply Theorem \ref{thm:KM} with $T \coloneqq \Midr_w$. 
For given $g\colon U \to \R^m$, $f^{(0)} \in \Knoten_g(V)$ and $\tau_r = \tau \in (0,1)$, 
we obtain the following iteration scheme:
\begin{align}\label{mid_iter}
f^{(r+1)} &\coloneqq 
f^{(r)} + \tau \left( \Midr_w f^{(r)} - f^{(r)}  \right).
\end{align}
In the scalar-valued case, we have an immediate convergence result.
\begin{corollary}\label{cor:conv_mid}
	Let $g\colon U \to \R$ be given.
	Then, for every $f^{(0)} \in \Knoten_g(V)$ and $\tau \in (0,1)$, 
	the sequence of iterates $\{f^{(r)} \}_{r \in \mathbb N}$ 
	generated by \eqref{mid_iter}
	converges to the minimal Lipschitz extension of $g$.
\end{corollary}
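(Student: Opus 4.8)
The plan is to recognize the iteration \eqref{mid_iter} as a Krasnoselskii--Mann scheme for the operator $T \coloneqq \Midr_w$ and to invoke Theorem \ref{thm:KM}. To this end I identify $\Knoten_g(V)$, in the scalar case $m=1$, with $\R^N$ where $N = |V \setminus U|$, by recording the free values on $V \setminus U$, and I equip it with the $\infty$-norm $\|f\|_\infty = \max_{u \in V \setminus U} |f(u)|$. Since every $f \in \Knoten_g(V)$ agrees with $g$ on $U$, the operator $\Midr_w$ maps $\Knoten_g(V)$ into itself, and the update in \eqref{mid_iter}, being a convex combination of $f^{(r)}$ and $\Midr_w f^{(r)}$ with $\tau \in (0,1)$, keeps all iterates in this affine subspace, so that $T$ becomes a genuine self-map of $\R^N$.

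The key step is to lift the scalar nonexpansiveness of Lemma \ref{lem:lip} to $\Midr_w$. For $f, \tilde f \in \Knoten_g(V)$ and any $u \in V \setminus U$, the value $\Midr_w f(u) = \midr_{w(u)} x(u)$ depends only on the neighbour data $x(u) = (f(v))_{v \sim u}$, and for neighbours $v \in U$ these entries agree for $f$ and $\tilde f$. Lemma \ref{lem:lip} therefore yields
\begin{equation*}
|\Midr_w f(u) - \Midr_w \tilde f(u)| \le \|x(u) - \tilde x(u)\|_\infty = \max_{v \sim u} |f(v) - \tilde f(v)| \le \|f - \tilde f\|_\infty,
\end{equation*}
and taking the maximum over $u \in V \setminus U$ shows that $T$ is nonexpansive in the $\infty$-norm. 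One should note here that Lemma \ref{lem:lip} is stated for ordered weights, but $\midr$ is symmetric in its arguments, so reordering is harmless, and the bound $L \le 1$ holds uniformly over the local weight vectors $w(u)$. The second hypothesis of Theorem \ref{thm:KM}, nonemptiness of the fixed-point set, is precisely the first assertion of Lemma \ref{lem:lip=harm}: the minimal Lipschitz extension of $g$ is $\infty$-harmonic, hence a fixed point of $\Midr_w$.

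With $\tau_r \equiv \tau \in (0,1)$ we have $\sum_r \tau_r = \infty$ and $\limsup_r \tau_r = \tau < 1$, so Theorem \ref{thm:KM} applies and the sequence $\{f^{(r)}\}_r$ converges to some fixed point of $\Midr_w$, that is, to an $\infty$-harmonic extension of $g$. Finally, the second assertion of Lemma \ref{lem:lip=harm} guarantees that for $m=1$ the $\infty$-harmonic extension is unique and coincides with the minimal Lipschitz extension, so the limit is the desired object.

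I do not expect a genuine obstacle here, as the statement is essentially a packaging of the cited lemmas into the Krasnoselskii--Mann framework. The only point requiring real care is the lifting of nonexpansiveness in the second paragraph, together with the observation that the Lipschitz constant in Lemma \ref{lem:lip} is bounded by $1$ independently of the particular local weight vector $w(u)$; everything else is a routine verification of the hypotheses of Theorem \ref{thm:KM}.
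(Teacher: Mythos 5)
Your proposal is correct and follows essentially the same route as the paper: nonexpansiveness of $\Midr_w$ in the $\infty$-norm lifted from Lemma \ref{lem:lip}, then Theorem \ref{thm:KM} for convergence to a fixed point, then Lemma \ref{lem:lip=harm} to identify that fixed point as the unique minimal Lipschitz extension. Your additional care about the affine identification with $\R^N$, the reordering of weights, and the explicit nonemptiness of the fixed-point set merely spells out steps the paper leaves implicit.
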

\begin{proof}
	By Lemma \ref{lem:lip}, we have for $f_1,f_2 \in \Knoten_g(V)$ that
	\begin{align}
	\| \Midr_w f_1 - \Midr_w f_2\|_\infty 
	&= 
	\max_{u \in V \setminus U} |\Midr_w f_1(u) - \Midr_w f_2(u)|\\
	&= 
	\max_{u \in V\setminus U} |\midr_{w(u)} x_1(u) - \midr_{w(u)} x_2(u)|\\
	&\le
	\max_{u \in V\setminus U} \|x_1(u)-x_2(u)\|_\infty 
	=
	\|f_1 - f_2\|_\infty.
	\end{align}
	Hence $\Midr_w \colon \R^N \to \R^N$ is nonexpansive with respect to the $\infty$-norm
	and  by Theorem \ref{thm:KM} the sequence converges to a fixed point of $\Midr_w$.
	By Lemma \ref{lem:lip=harm} there is exactly one  fixed point which is the minimal Lipschitz extension of $g$.
\end{proof}
For the vector-valued case, we apply the iteration scheme \eqref{mid_iter} as a heuristic method and obtain promising results 
in Section \ref{sec:numerics}. However, we did not succeed in proving a convergence theorem. On the positive side, we have the following result.
\begin{corollary}\label{cor:conv_mid_vector}
	Let $g\colon U \to \R^m$, $m \ge 1$ be given.
	If for $f^{(0)} \in \Knoten_g(V)$ the sequence of iterates $\{f^{(r)} \}_{r \in \mathbb N}$ generated by \eqref{mid_iter} is asympotically regular, 
	that is, it satisfies $\lim_{r \to \infty} \Vert f^{(r+1)} - f^{(r)} \Vert_{2,\infty} = 0$, 
	then every cluster point is an $\infty$-harmonic extension of $g$.
\end{corollary}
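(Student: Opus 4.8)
The plan is to exploit the continuity of $\Midr_w$ together with the asymptotic regularity hypothesis to pass to the limit along a convergent subsequence. First I would rewrite the defining recursion \eqref{mid_iter} in the form $f^{(r+1)} - f^{(r)} = \tau \left( \Midr_w f^{(r)} - f^{(r)} \right)$. Since $\tau \in (0,1)$ is a fixed nonzero constant, the assumed asymptotic regularity $\|f^{(r+1)} - f^{(r)}\|_{2,\infty} \to 0$ is equivalent to
\begin{equation*}
\lim_{r \to \infty} \|\Midr_w f^{(r)} - f^{(r)}\|_{2,\infty} = 0.
\end{equation*}
This is the only place where the hypothesis enters. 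Note also that, since $f^{(0)} \in \Knoten_g(V)$ and both $I$ and $\Midr_w$ fix $g$ on $U$, every convex combination $(1-\tau)I + \tau\Midr_w$ preserves the constraint $f\restriction_U = g$; hence all iterates lie in the closed affine set $\Knoten_g(V)$, and so does any cluster point.

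Next I would fix an arbitrary cluster point $\bar f$ of $\{f^{(r)}\}_r$, say $f^{(r_k)} \to \bar f$ as $k \to \infty$, with $\bar f \in \Knoten_g(V)$ by the previous remark. The key step is the continuity of $\Midr_w$. The operator acts coordinatewise, $(\Midr_w f)(u) = \midr_{w(u)} \bigl( (f(v))_{v \sim u} \bigr)$ for $u \in V \setminus U$ and $(\Midr_w f)\restriction_U = g$, so its continuity reduces to that of each $\midr_{w(u)}$. By Lemma \ref{lem:hoelder} every $\midr_{w(u)}$ is locally $\tfrac12$-H\"older continuous, hence continuous, and therefore $\Midr_w$ is continuous on $\Knoten_g(V)$. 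Consequently $\Midr_w f^{(r_k)} \to \Midr_w \bar f$.

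Finally I would pass to the limit along the subsequence in the identity above, obtaining
\begin{equation*}
\Midr_w \bar f - \bar f = \lim_{k \to \infty} \bigl( \Midr_w f^{(r_k)} - f^{(r_k)} \bigr),
\end{equation*}
whose $\|\cdot\|_{2,\infty}$-norm is dominated by the full-sequence limit, which is $0$. Hence $\Midr_w \bar f = \bar f$, so $\bar f$ is a fixed point of $\Midr_w$ and therefore an $\infty$-harmonic extension of $g$. I do not expect a serious obstacle here: the entire analytic difficulty has already been absorbed into Lemma \ref{lem:hoelder}, and what remains is the standard argument that \emph{asymptotic regularity together with continuity of the underlying operator forces cluster points to be fixed points}. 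The only mild point requiring attention is that Lemma \ref{lem:hoelder} supplies \emph{local} H\"older continuity, but since it holds at every point it yields genuine continuity at the particular cluster point $\bar f$, which is all the argument needs.
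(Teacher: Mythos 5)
Your proposal is correct and takes essentially the same route as the paper: both arguments hinge on the continuity of the midrange operator (which you correctly extract from Lemma \ref{lem:hoelder}) and pass to the limit along a convergent subsequence, using asymptotic regularity to force the fixed-point identity $\Midr_w \bar f = \bar f$. The only cosmetic difference is that you divide the recursion by $\tau$ at the outset to get $\Midr_w f^{(r)} - f^{(r)} \to 0$ directly, whereas the paper extracts a further convergent subsequence of $\{f^{(r_j+1)}\}_j$ and identifies its limit with that of $\{f^{(r_j)}\}_j$ before rearranging.
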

\begin{proof}
	Assume that we have a convergent subsequence $\{f^{(r_j)} \}_j$ with limit $\hat f$. 
	Since the iterates $\{f^{(r_j+1)} \}_j$ are bounded, we can choose a subsequence of $\{f^{(r_j+1)} \}_j$ 
	(again denoted with the same indices) such that also $\{f^{(r_j +1)} \}_j$ converges with limit $\tilde f$. 
	Since  $\lim_{r \to \infty} \Vert f^{(r_j+1)} - f^{(r_j)} \Vert_{2,\infty} = 0$, 
	it follows $\tilde f = \hat f$.
	Let $u\in V\setminus U.$
	Using the continuity of $\midr_{w(u)}$, we obtain
	\begin{align}
	\hat f(u) 
	&= \lim_{j \to \infty} f^{(r_j+1)} (u) = \lim_{j \to \infty} (1-\tau)f^{(r_j)}(u) + \tau \midr_{w(u)} f^{(r_j)}(u)\\
	&= (1-\tau)\hat f(u) + \tau \midr_{w(u)} \hat f(u).
	\end{align}
	Rearranging the terms yields the assertion.
\end{proof}
\section{$\infty$-Laplacians on Scalar-Valued Functions} \label{sec:inf_lap}
The minimizers of $E_p$ in \eqref{E_p} are determined by the zero of the \emph{anisotropic $p$-Laplacian operator}
\begin{equation} \label{grad}
\Delta_{w,p} f(u) \coloneqq  \sum_{v \in V} w(u,v)^p |f(u) - f(v)|^{p-2} \big( f(u) - f(v) \big) =0,
\end{equation}
for $u \in V \setminus U$.
To the best of our knowledge there exists no satisfactory definition of an $\infty$-Laplacian for vector-valued functions. 
There was an attempt in this direction in~\cite{BT2017} which is unfortunately not well-defined.

For scalar valued-functions, the $\infty$-Laplacian $\Delta_{w,\infty}\coloneqq \Knoten_g(V) \to \Knoten_g(V)$ is given by
\begin{align}
\Delta_{w,\infty} f(u) 
&\coloneqq
\frac{1}{2} \Big(  \max_{v \sim u} \ w(u,v) \left( f(v) - f(u) \right) 
+ \min_{v \sim u} w(u,v)\left( f(v) - f(u) \right) 
\Big) \\
&=
\frac{1}{2} \Big( \max_{v \sim u}  w(u,v) \left( f(v) - f(u) \right)  
- \max_{v \sim u} w(u,v)\left( f(u) - f(v) \right) \Big) \label{lap_inf}
\end{align}
for $u \in V \setminus U$. This definition is due to \cite{PSSW2005} and for weighted graphs due to \cite{EDLL2014,ETT2015}.
Note that $w(u,u) > 0$ implies  $\max_{v \sim u} \ w(u,v) \left( f(v) - f(u) \right) \ge 0$. 
For non-weighted graphs, we have 
\begin{equation} \label{eq_inf}
\Delta_{\infty} f(u) = \frac{1}{2} \max_{v \sim u} \left( f(v) - f(u) \right)  + \frac{1}{2} \min_{v \sim u} \left( f(v) - f(u) \right)
= \midr x(u) - f(u).
\end{equation}
Hence, on $\Knoten_g(V)$, the zero of $\Delta_{\infty}$ coincide with the fixed point of $\Midr$,
that is, with the unique $\infty$-harmonic extension of $g$. This observation extends into the weighted case as well. Indeed, it was showed already in \cite{EDLL2014} that, 
given $g\colon U\to\R,$ there exists a unique extension $f\in \Knoten_g(V)$ with $\Delta_{w,\infty}f(u)=0$ for every $u\in V\setminus U.$ We now obtain the desired statement.
\begin{theorem}\label{inf-lap=inf-harm}
	Let $m=1$ and $g\colon U\to\R.$ Then $f\in \Knoten_g(V)$ satisfies $\Delta_{w,\infty} f(u) = 0$ for every $u \in V \setminus U$ if and only if 
	it is an $\infty$-harmonic extension of~$g$ and it is the case if and only if it is the minimal Lipschitz extension of $g$.
\end{theorem}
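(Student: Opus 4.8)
The plan is to reduce the whole statement to a pointwise equivalence and then combine it with the uniqueness result of \cite{EDLL2014} and with Lemma~\ref{lem:lip=harm}\eqref{harm:i}. The core claim is the local equivalence: for a fixed $u\in V\setminus U$ one has $\Delta_{w,\infty}f(u)=0$ if and only if $f(u)=\Midr_w f(u)$. Writing $M^+\coloneqq\max_{v\sim u}w(u,v)\bigl(f(v)-f(u)\bigr)$ and $M^-\coloneqq\max_{v\sim u}w(u,v)\bigl(f(u)-f(v)\bigr)$, definition~\eqref{lap_inf} shows that $\Delta_{w,\infty}f(u)=0$ is exactly the balance $M^+=M^-$; both quantities are nonnegative since the self-loop $w(u,u)>0$ contributes the value $0$ to each maximum. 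On the other hand, $f(u)=\Midr_w f(u)$ means that $f(u)$ minimises the convex, piecewise-linear function $\phi(a)\coloneqq\max_{v\sim u}w(u,v)|f(v)-a|$, whose value at $a=f(u)$ is $\max(M^+,M^-)$.

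First I would prove the local equivalence by a one-dimensional convexity argument. Since $\phi$ is convex and piecewise linear, $f(u)$ is a minimiser precisely when the one-sided derivatives straddle zero, i.e.\ $\phi'_-(f(u))\le 0\le\phi'_+(f(u))$. If $M^+>M^-$, every term active at $a=f(u)$ stems from a neighbour $v$ with $f(v)>f(u)$ and contributes slope $-w(u,v)<0$, so $\phi'_+(f(u))<0$ and $f(u)$ is not optimal; the case $M^->M^+$ is symmetric. If $M^+=M^-$, the active set contains both a neighbour with $f(v)>f(u)$ (slope $-w(u,v)$) and one with $f(v)<f(u)$ (slope $+w(u,v)$), so $\phi'_-(f(u))<0<\phi'_+(f(u))$ and $f(u)$ is the minimiser. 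This yields the pointwise equivalence and hence the global equivalence of the first two conditions.

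To connect with the minimal Lipschitz extension I would then argue by uniqueness. By \cite{EDLL2014} there is a unique $f\in\Knoten_g(V)$ with $\Delta_{w,\infty}f(u)=0$ for all $u\in V\setminus U$; by the local equivalence just established, the $\infty$-harmonic extension of $g$ is therefore also unique. On the other hand, the minimal Lipschitz extension $f^\ast$, which exists and is unique by Theorem~\ref{thm:coincide}, is $\infty$-harmonic by Lemma~\ref{lem:lip=harm}\eqref{harm:i}. Since the $\infty$-harmonic extension is unique, $f^\ast$ must coincide with it, and consequently with the unique zero of $\Delta_{w,\infty}$. Thus all three conditions single out the same function $f^\ast$, which is the asserted equivalence.

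The hard part will be the one-dimensional optimality analysis in the second paragraph, where one must keep track of the active set and the one-sided slopes of the maximum of the $\mathrm{V}$-shaped maps $w(u,v)|f(v)-\cdot|$; alternatively, the same balance $M^+=M^-$ can be read off directly from the explicit scalar formula~\eqref{eq:ScalarMidr}, which realises $\midr_{w(u)}$ through the two neighbours attaining $\max_{k,l}|x_k-x_l|/(1/w_k+1/w_l)$. A point I must respect is that Lemma~\ref{lem:lip=harm}\eqref{harm:ii} is itself derived from the present theorem, so only part~\eqref{harm:i} of that lemma, together with the external uniqueness statement of \cite{EDLL2014}, may be used here.
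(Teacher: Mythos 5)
Your proposal is correct, and its overall architecture is the same as the paper's: establish a pointwise equivalence between zeros of $\Delta_{w,\infty}$ and fixed points of $\Midr_w$, then combine the uniqueness of the zero from \cite{EDLL2014} with Theorem~\ref{thm:coincide} and Lemma~\ref{lem:lip=harm}\eqref{harm:i} to identify all three objects; you also correctly spot and avoid the circularity with Lemma~\ref{lem:lip=harm}\eqref{harm:ii}, whose proof cites the present theorem. The execution of the local step differs slightly, and in an instructive way: the paper explicitly proves only the implication ``balance $M^+=M^-$ at $f(u)$ implies that $f(u)$ minimises $\phi(a)\coloneqq\max_{v\sim u}w(u,v)|f(v)-a|$'', via the global inequality chain $\phi\bigl(f(u)+a\bigr)\ge \max_{v\sim u}w(u,v)\max\{0,\,f(u)-f(v)+a\}\ge M^-=\phi\bigl(f(u)\bigr)$ for $a\ge 0$ (symmetrically for $a\le 0$), and leaves the converse to the surrounding discussion, where it rests on the uniqueness of the weighted midrange (strong convexity of $\phi^2$): a balanced point exists by an intermediate-value argument, is a minimiser by the displayed chain, and must therefore be \emph{the} minimiser. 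Your two-sided slope analysis proves both implications directly, without invoking uniqueness of the minimiser, and is in this respect more self-contained than the paper's literal text.

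One small slip to repair: in the balanced case your claim that the active set contains both a neighbour with $f(v)>f(u)$ and one with $f(v)<f(u)$ fails when $M^+=M^-=0$, i.e.\ when $f(v)=f(u)$ for all $v\sim u$ (recall $M^\pm\ge 0$ is guaranteed by the self-loop $w(u,u)>0$, so this degenerate case is the only exception). There the conclusion survives trivially, since $\phi\bigl(f(u)\bigr)=0\le\phi(a)$ for every $a$, so $f(u)$ is the minimiser; one should simply record this case separately (or observe that the active self-loop term then contributes slopes $\mp w(u,u)$ to the one-sided derivatives). With that one-line addition the argument is complete.
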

\begin{proof}
	On account of the above discussion it remains to prove the following. 
	If $f \in  \Knoten_g(V)$ fulfills $\Delta_{w,\infty} f(u) = 0$ for all $u \in V \setminus U$, then $f$ is $\infty$-harmonic. 
	Since  
	\begin{align}
	\max_{v \sim u} \ w(u,v) \left( f(v) - f(u) \right) = \max_{v \sim u} w(u,v)\left( f(u) - f(v) \right)
	= \max_{v \sim u} \ w(u,v) \left| f(v) - f(u) \right|,
	\end{align}
	we obtain for $a \ge 0$ that
	\begin{align}
	\max_{v \sim u} w(u,v) \left|f(v) - \left( f(u) + a \right) \right|
	&\ge
	\max_{v \sim u} w(u,v)  \max \l\{ 0, f(u) -  f(v) + a\r\} \\
	&\ge 
	\max_{v \sim u} w(u,v) \left(f(u) -  f(v)\right) \\ &=
	\max_{v \sim u} w(u,v) \left| f(v) -  f(u) \right|.
	\end{align}
	Similarly, for $a \le 0,$ we obtain the inequality
	\begin{equation*}
	\max_{v \sim u} w(u,v) \left|f(v) - \left( f(u) + a \right) \right| \ge \max_{v \sim u} w(u,v) \left| f(v) -  f(u) \right|.
	\end{equation*}
	This finishes the proof.
\end{proof}

For given $g\colon U \to \R$ and $f^{(0)} \in \Knoten_g(V)$ 
we consider the iteration scheme
\begin{equation}\label{inf_lap_iter}
f^{(r+1)} \coloneqq 
f^{(r)}  + \tau \Delta_{w,\infty} f^{(r)} .
\end{equation}
This scheme was proposed in \cite{EDLL2014,ETT2015} without a convergence proof.
The authors only proved that in case of convergence the sequence converges to a zero of the $\infty$-Laplacian.

In the non-weighted case, this can be rewritten by \eqref{eq_inf} as 
\begin{equation*}
f^{(r+1)}(u) =  (1-\tau) f^{(r)} (u) + \tau  \midr x^{(r)}(u),\qquad u \in V \setminus U.
\end{equation*}
By \eqref{mid_iter} and Corollary \ref{cor:conv_mid} we see that the sequence of iterates \eqref{inf_lap_iter} 
converges to the minimal Lipschitz extension of $g$ for $\tau \in (0,1)$.

For the weighted setting, this is also true by the following corollary.
\begin{corollary}\label{cor:conv_inf_lap}
	Let $g\colon U \to \R$ be given.
	Then, for every $f^{(0)} \in \Knoten_g(V)$ and $\tau \in (0,1)$, 
	the sequence of iterates $\{f^{(r)} \}_{r \in \N}$ 
	generated by \eqref{inf_lap_iter}
	converges to the minimal Lipschitz extension of $g$.
\end{corollary}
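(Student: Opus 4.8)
The plan is to recognise \eqref{inf_lap_iter} as a Krasnoselskii--Mann iteration and to apply Theorem~\ref{thm:KM}. To this end I would set $Sf \coloneqq f + \Delta_{w,\infty} f$, understood as the self-map of $\Knoten_g(V)$ that leaves the boundary values on $U$ untouched and acts on the free vertices $u \in V\setminus U$ by $Sf(u) = f(u) + \Delta_{w,\infty} f(u)$. Then \eqref{inf_lap_iter} becomes $f^{(r+1)} = (1-\tau)f^{(r)} + \tau S f^{(r)}$, i.e.\ exactly the Krasnoselskii--Mann scheme with constant relaxation $\tau_r \equiv \tau \in (0,1)$. Identifying $\Knoten_g(V)$ with $\R^N$ on the unknown coordinates, it then suffices to check that $S$ has a nonempty fixed point set and is nonexpansive with respect to the $\infty$-norm. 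The fixed points of $S$ are precisely the zeros of $\Delta_{w,\infty}$, which by Theorem~\ref{inf-lap=inf-harm} exist, are unique, and coincide with the minimal Lipschitz extension of $g$; in particular the fixed point set is a nonempty singleton.

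The only real work is the nonexpansiveness of $S$, which I would establish pointwise. Fix $u\in V\setminus U$ and $f,h\in\Knoten_g(V)$, write $d\coloneqq \|f-h\|_\infty$, and abbreviate
\begin{equation*}
\phi_f(u) \coloneqq \max_{v\sim u} w(u,v)\bigl(f(v)-f(u)\bigr), \qquad \psi_f(u) \coloneqq \min_{v\sim u} w(u,v)\bigl(f(v)-f(u)\bigr),
\end{equation*}
so that, by \eqref{lap_inf}, $Sf(u) = f(u) + \tfrac12\phi_f(u) + \tfrac12\psi_f(u)$. Let $v_1$ attain the maximum in $\phi_f(u)$ and let $v_2$ attain the minimum in $\psi_h(u)$. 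Since $v_1$ is a feasible index for $\phi_h(u)$ we have $\phi_h(u)\ge w(u,v_1)(h(v_1)-h(u))$, and since $v_2$ is a feasible index for $\psi_f(u)$ we have $\psi_f(u)\le w(u,v_2)(f(v_2)-f(u))$. Substituting these two bounds yields
\begin{equation*}
Sf(u)-Sh(u) \le \Bigl(1-\tfrac{w(u,v_1)+w(u,v_2)}{2}\Bigr)\bigl(f(u)-h(u)\bigr) + \tfrac{w(u,v_1)}{2}\bigl(f(v_1)-h(v_1)\bigr) + \tfrac{w(u,v_2)}{2}\bigl(f(v_2)-h(v_2)\bigr).
\end{equation*}

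Here the weights satisfy $w(u,v)\in(0,1]$, so the three coefficients are nonnegative and sum to $1$; the right-hand side is therefore a convex combination of quantities each bounded in modulus by $d$, whence $Sf(u)-Sh(u)\le d$. Exchanging $f$ and $h$ gives $|Sf(u)-Sh(u)|\le d$, and maximising over $u\in V\setminus U$ (on $U$ both sides agree) yields $\|Sf-Sh\|_\infty\le\|f-h\|_\infty$. With nonexpansiveness established, Theorem~\ref{thm:KM} applies with $T\coloneqq S$, so the iterates converge to a fixed point of $S$, which by the discussion above is the unique minimal Lipschitz extension of $g$. The main obstacle is precisely this nonexpansiveness estimate, and the point that makes the convex-combination argument go through is the hypothesis $w(u,v)\le 1$, which keeps the coefficient $1-\tfrac12(w(u,v_1)+w(u,v_2))$ of the central term nonnegative.
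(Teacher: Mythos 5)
Your proposal is correct and follows essentially the same route as the paper: both recast \eqref{inf_lap_iter} as a Krasnoselskii--Mann iteration for the operator $f \mapsto f + \Delta_{w,\infty} f$, obtain existence and uniqueness of its fixed point from Theorem~\ref{inf-lap=inf-harm}, and prove nonexpansiveness in the $\infty$-norm by the identical device of bounding the two max/min terms with the suboptimal indices (your $v_1$, $v_2$ are exactly the paper's $y_1$, $z_2$), arriving at the same convex combination with nonnegative coefficients thanks to $w(u,v) \le 1$. Nothing further is needed.
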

\begin{proof}
	Consider the operator $\Phi\colon\Knoten_g(V) \to \Knoten_g(V)$ defined by 
	\begin{equation}
	\Phi f  \coloneqq
	f + \Delta_{w,\infty} f.
	\end{equation}
	By virtue of Theorem \ref{inf-lap=inf-harm}, the mapping $\Phi$ has a unique fixed point determined by the zero of the $\infty$-Laplacian.
	We show that $\Phi$ is nonexpansive. Then it follows immediately from Theorem \ref{thm:KM} that the sequence $\{f^{(r)} \}_{r \in \mathbb N}$ converges to this fixed point.
	For $u\in V \setminus U$ we rewrite 
	\begin{equation}
	\Delta_{w,\infty} f(u) =
	\frac{1}{2} \max_{v \sim u} w(u,v) \left( f(v)-f(u) \right)
	- 
	\frac{1}{2}
	\max_{v \sim u} w(u,v) \left( f(u)-f(v) \right).
	\end{equation}
	For $i=1,2,$ let $f_i\in \Knoten_g(V)$. Now, define $y_i \coloneqq \argmax_{v \sim u} w(u,v) \left( f_i(v)-f_i(u) \right)$ together with
	$z_i \coloneqq \argmax_{v \sim u} w(u,v) \left( f_i(u)-f_i(v) \right)$.
	Then, for $u\in V \setminus U$, we obtain 
	\begin{align}
	&\Phi f_1 (u) - \Phi f_2  (u)
	=
	f_1(u) - f_2(u) + \Delta_{w,\infty} f_1(u) - \Delta_{w,\infty} f_2(u)\\
	&=
	f_1(u) - f_2(u)  
	+ \frac{w(u,y_1)}{2} \left( f_1(y_1) -f_1(u) \right)
	- \frac{w(u,z_1)}{2}  \left( f_1(u) - f_1(z_1) \right)\\
	&\quad - \frac{w(u,y_2)}{2}  \left( f_2(y_2) -f_2(u) \right)
	+ \frac{w(u,z_2)}{2}  \left( f_2(u) - f_2(z_2) \right)\\
	&\leq
	f_1(u) - f_2(u) + 
	\frac{w(u,y_1)}{2} \left( f_1(y_1) - f_1(u) - f_2(y_1) + f_2(u) \right)\\
	&\quad - 
	\frac{w(u,z_2)}{2} \left(f_1(u) - f_1(z_2) - f_2(u) + f_2(z_2) \right)\\
	&= 
	\left( 1-\frac{w(u,y_1)+w(u,z_2)}{2} \right) \left( f_1(u) - f_2(u) \right) \\
	&\quad + 
	\frac{w(u,y_1)}{2} \left( f_1(y_1) - f_2(y_1) \right)
	+\frac{w(u,z_2)}{2} \left( f_1(z_2) - f_2(z_2) \right)
	\end{align}
	and with $\|f_1-f_2\|_\infty = \max_{u \in V} |f_1(u) -f_2(u)|$ further
	$
	\Phi f_1 (u) - \Phi f_2  (u)
	\leq 
	\| f_1-f_2\|_{\infty}.
	$
	Analogously, we get 
	$
	\Phi f_1(u) -\Phi f_2(u) \geq - \| f_1-f_2\|_{\infty}
	$
	and thus
	$
	\|\Phi f_1 - \Phi f_2\|_{\infty} \leq \| f_1-f_2\|_{\infty}.
	$
\end{proof}

\section{Numerical Aspects of Approximating Algorithms} \label{sec:algs}
In this section, we comment on numerical computations of the minimizer of $E_p$ for large $p \in \mathbb N$.
By Theorem \ref{thm:ConvEp} such a minimizer can be considered as an approximation of the minimal Lipschitz extension of $g$.
As in the scalar-valued case, we refer to it as Polya's method.
Using a preconditioned Newton method, computations are performed up to $p=2400$ in Section~\ref{sec:numerics}. 
Let us mention that we could also compute the minimizer of $E_{\infty,p}$, e.g.,~by the ADMM method described in \cite{Hertrich2018}.
However, this method is more time consuming and works so far only for moderate sizes of $p \le 25$. 
We also apply the iterated midrange filter from Section \ref{sec:midrange+harm}, but in a Gauss-Seidel like fashion.
\subsection{Preconditioned Newton method}\label{sec:Newton}
Since $E_p$ is a smooth functional, the minimizers can be computed using Newton's method.
This concept was also pursued for the continuous setting in \cite{KatPry16}.
\begin{theorem} \cite[Theorem 3.7]{NocWri06}
	\\
	Let $F \in C^2(\R^N,\R)$ have a unique minimizer $x^*$ with Lipschitz continuous Hessian 
	$\nabla^2 F$ in a neighborhood of $x^*$.
	Further, assume that $\nabla^2 F$ is positive definite at $x^*$.
	If the initial guess $x^{(0)}$ is sufficiently close to $x^*$, the iteration
	\begin{align}
	\nabla^2 F(x^{(r)}) \, h^{(r)} &= \nabla F(x^{(r)}), \label{system}\\
	x^{(r+1)} &= x^{(r)} - h^{(r)}  \label{update}
	\end{align}
	converges quadratically to $x^*$.
\end{theorem}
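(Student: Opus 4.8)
The plan is to establish the classical local quadratic convergence estimate for Newton's method, which rests on two ingredients: the uniform invertibility of the Hessian in a neighborhood of $x^*$, and a second-order Taylor argument that exploits the Lipschitz continuity of $\nabla^2 F$. First I would use that $\nabla^2 F(x^*)$ is positive definite, hence invertible, together with the continuity of $x \mapsto \nabla^2 F(x)$. Since matrix inversion is continuous on the open set of invertible matrices, there exist a radius $\rho > 0$ and a constant $\beta > 0$ such that $\nabla^2 F(x)$ is invertible with $\|\nabla^2 F(x)^{-1}\| \le \beta$ for all $x$ with $\|x - x^*\| \le \rho$; shrinking $\rho$ if necessary, I keep $\nabla^2 F$ Lipschitz with constant $L$ on this ball.

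Next I would derive the error recursion. Since $x^*$ is a minimizer, $\nabla F(x^*) = 0$. Combining \eqref{system} and \eqref{update} gives
\[
x^{(r+1)} - x^* = x^{(r)} - x^* - \nabla^2 F(x^{(r)})^{-1}\nabla F(x^{(r)}),
\]
which I rewrite, using $\nabla F(x^*)=0$, as
\[
x^{(r+1)} - x^* = \nabla^2 F(x^{(r)})^{-1}\left[\nabla^2 F(x^{(r)})(x^{(r)} - x^*) - \bigl(\nabla F(x^{(r)}) - \nabla F(x^*)\bigr)\right].
\]
The heart of the argument is to express the gradient difference through the fundamental theorem of calculus as $\nabla F(x^{(r)}) - \nabla F(x^*) = \int_0^1 \nabla^2 F(x^* + t(x^{(r)}-x^*))(x^{(r)}-x^*)\,dt$, so that the bracketed term becomes $\int_0^1 [\nabla^2 F(x^{(r)}) - \nabla^2 F(x^* + t(x^{(r)}-x^*))](x^{(r)}-x^*)\,dt$.

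Then I would apply the Lipschitz bound $\|\nabla^2 F(x^{(r)}) - \nabla^2 F(x^* + t(x^{(r)}-x^*))\| \le L(1-t)\|x^{(r)}-x^*\|$ and integrate in $t$ to obtain
\[
\|x^{(r+1)} - x^*\| \le \beta \cdot \tfrac{L}{2}\|x^{(r)} - x^*\|^2 = \tfrac{\beta L}{2}\|x^{(r)} - x^*\|^2,
\]
which is precisely the quadratic contraction. Finally, choosing $x^{(0)}$ so close to $x^*$ that $\|x^{(0)} - x^*\| \le \rho$ and $\tfrac{\beta L}{2}\|x^{(0)} - x^*\| \le \tfrac12$, an induction shows that every iterate stays in the ball of radius $\rho$, that the error at least halves at each step, and hence that $x^{(r)} \to x^*$ with the quadratic rate preserved throughout.

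The main obstacle is really the bookkeeping in the first step, namely establishing the uniform bound $\beta$ on $\|\nabla^2 F(x)^{-1}\|$ over a fixed neighborhood rather than merely at $x^*$; once this uniform invertibility and the Lipschitz constant $L$ are secured on a common ball, the Taylor-with-integral-remainder computation and the concluding induction are routine.
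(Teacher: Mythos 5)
Your proposal is correct: the error recursion via $x^{(r+1)}-x^* = \nabla^2 F(x^{(r)})^{-1}\bigl[\nabla^2 F(x^{(r)})(x^{(r)}-x^*) - \int_0^1 \nabla^2 F(x^*+t(x^{(r)}-x^*))(x^{(r)}-x^*)\,dt\bigr]$, the Lipschitz bound integrating $(1-t)$ to get the factor $\tfrac{L}{2}$, the uniform bound $\beta$ on the inverse Hessian near $x^*$, and the concluding induction keeping iterates in the ball are all sound, using $\nabla F(x^*)=0$ from the minimizer hypothesis. The paper does not prove this statement itself but cites it as \cite[Theorem 3.7]{NocWri06}, and your argument is essentially the standard proof given in that reference, so there is nothing further to compare.
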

A global convergence result under some stronger assumptions can be found in 
\cite[Theorem~6.3]{NocWri06} or \cite[Section~9.5.3]{BoyVan04}.
The idea is to change the Newton scheme to
\begin{equation}\label{newton_mod}
x^{(r+1)} = x^{(r)} - \alpha_r \nabla^2 F(x^{(r)})^{-1} \nabla f(x^{(r)}),
\end{equation}
where $\alpha_r$ is computed with an Armijo backtracking line search which always tries the step size $\alpha_r =1$ first.
If $\mathcal{K} = \{x \in \mathbb R^N \colon F(x) \leq F(x_0)\}$ is compact and there exists a constant $C$ such that the condition number 
\begin{equation}\label{got_it}
\text{cond}\bigl(\nabla^2f(x^{(r)})\bigr) \leq C
\end{equation}
for all $r$, then the scheme is globally convergent to $x^*$.
For $r$ large enough, the step size is always chosen as $\alpha_r =1$ so that the convergence becomes quadratic.
In our numerical examples, we have observed \eqref{got_it}.

In order to apply Newton's method to $E_p$, we need to compute its Hessian.
Differentiating the gradient of $E_p$ in \eqref{grad}, we observe that the Hessian is block structured with diagonal blocks
$\left( \frac{\partial^2 E_p} {\partial f_i(u) \partial f_j(u)} \right)_{i,j=1}^m$
and non-diagonal blocks 
$\left( \frac{\partial^2 E_p} {\partial f_i(u) \partial f_j(v)} \right)_{i,j=1}^m$ for $u \sim v$, $u \not = v$, where 
{\small
	\begin{align}
	&\frac{\partial^2 E_p} {\partial f_i(u)^2} =  \sum_{v \in V} w(u,v)^p \Bigl( |f(u) - f(v)|^{p-2} + (p-2) |f(u) - f(v)|^{p-4} \big( f_i(u) - f_i(v) \big)^2\Bigr),\\
	&\frac{\partial^2 E_p} {\partial f_i(u) \partial f_j(u)} =  \sum_{v \in V} w(u,v)^p  (p-2)|f(u) - f(v)|^{p-4} \big( f_i(u) - f_i(v) \big)\big( f_j(u) - f_j(v) \big),\\
	&\frac{\partial^2 E_p} {\partial f_i(u) \partial f_i(v)} =  - w(u,v)^p (p-2) \Bigl(|f(u) - f(v)|^{p-4} \big( f_i(u) - f_i(v) \big)^2 + |f(u) - f(v)|^{p-2}\Bigr),\\
	&\frac{\partial^2 E_p} {\partial f_i(u) \partial f_j(v)} =  - w(u,v)^p  (p-2)|f(u) - f(v)|^{p-4} \big( f_i(u) - f_i(v) \big)\big( f_j(u) - f_j(v) \big).
	\end{align}
}
For large $p$ the factor $w(u,v)^p |f(u) - f(v)|^{p-4}$ can get very large, resp., very small, possibly causing 
a bad condition number of the Hessian.
Therefore, the choice of a suitable preconditioner is crucial for solving the linear system of equations in \eqref{system}.
One possible preconditioner choice is the diagonal matrix $D$ with entries
\begin{equation}
D_{f_i(u),f_i(u)} = \frac{c}{\max_{v \in V}\bigl(w(u,v) |f(u) - f(v)|\bigr)^{p-2}}, \quad c \in (0,1]
\end{equation}
in the diagonal block related to $u \in V \setminus U$.
This matrix ensures that for every $f(u)$ at least one edge has numerically reasonable values.
Then, we solve 
\begin{equation}\label{eq:HessianDirection}
D \; \nabla^2 E_p \; h = - D \; \nabla E_p
\end{equation}
instead of \eqref{system}.
In our implementation, the minimizers of $E_p$ are computed for increasing $p$ with the previous minimizer as an initialization.
We always choose $p=2$ as a starting point, since this problem reduces to solving a linear system.
It was not necessary to update the preconditioner after every Newton step and the one from the first step is used for all iterations. 
Since \eqref{eq:HessianDirection} is non symmetric, we choose \emph{bicgstab} with Gauss-Seidel or Jacobi preconditioning as a linear solver. 
For better performance the problem can be solved on a GPU.
This naturally rises the question if a symmetric preconditioning of \eqref{system} exists which circumvents the numerical instabilities.

\subsection{Iterated Midrange Filter}
Based on the ideas in Section~\ref{sec:midrange+harm} a second approach to ,,compute'' the minimal Lipschitz extension of $g$ is to apply the iterated midrange filter with an appropriate starting point.
As starting point $f^{(0)}$ we use again the minimizer of $E_2$, i.e.~the zero of the $2$-Laplacian.
In contrast to Section~\ref{sec:midrange+harm}, we apply the midrange filter in a cyclic or Gauss-Seidel like fashion.
If the vertices in $V \setminus U$ are numbered from $0$ to $N$ with 
$c(k) \coloneqq \text{mod}(r,N)$, one cycle of the algorithm reads for $\tau \in (0,1]$ as follows
\begin{equation}\label{eq:CyclicUpdate}
f^{(k+1)}(v) \coloneqq \left\{
\begin{array}{ll}f^{(k)}(v) + \tau \left( \midr_{w(v)} x^{(k)}(v) - f^{(k)}(v) \right) & \text{ if } v=u_{c(k)},\\
f^{(k)}(v) & \text{ otherwise.}
\end{array}\right.
\end{equation}
Clearly, the vertices can be also visited in a random cyclic order.
For the case $m=1$, a convergence result follows similar as in Corollary~\ref{cor:conv_mid} with minor modifications due to the cyclic update.
Unfortunately, the proof does not generalize to the vector-valued case, because the midrange filter is not non-expansive for $m\geq 2$.
However, if the sequence converges, we can adapt Corollary~\ref{cor:conv_mid_vector} to the cyclic setting and observe that the scheme provides an $\infty$-harmonic extension of $g$.
Additionally, we have the following ,,descent'' property. 
\begin{proposition}
	The update $f^{(k+1)}$ in \eqref{eq:CyclicUpdate} is $\Llex$ smaller than $f^{(k)}$ if $f^{(k+1)} \neq f^{(k)}$.
\end{proposition}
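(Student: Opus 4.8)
The plan is to reduce the comparison to the single coordinate that the cyclic sweep alters and then to compare the two sorted oscillation vectors through their super-level-set counting functions. Write $f := f^{(k)}$ and $h := f^{(k+1)}$, let $u_0 := u_{c(k)}$ be the updated vertex and set $a_0 := \midr_{w(u_0)} x^{(k)}(u_0)$. By \eqref{eq:CyclicUpdate} the two functions agree on $V\setminus\{u_0\}$ and $h(u_0) = (1-\tau) f(u_0) + \tau a_0$; since $\tau\in(0,1]$, the hypothesis $h\neq f$ is equivalent to $a_0\neq f(u_0)$.

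First I would establish a strict decrease of the oscillation at $u_0$. Consider the convex function $\phi(a) := \max_{v\sim u_0} w(u_0,v)\,|f(v)-a|$, whose unique minimizer is $a_0$ by Section~\ref{subsec:midr}. Because $\phi(f(u_0)) = Lf(u_0) =: \alpha$ while $\phi(a_0) < \alpha$, convexity gives $\phi(h(u_0)) \le (1-\tau)\,\phi(f(u_0)) + \tau\,\phi(a_0) < \alpha$. Since $Lh(u_0)\le\phi(h(u_0))$ — the only discrepancy being the self-loop term $w(u_0,u_0)|h(u_0)-h(u_0)|=0$, which does not enter $Lh(u_0)$ — I obtain $\beta := Lh(u_0) < \alpha = Lf(u_0)$.

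Next I would control how the remaining oscillations can change, which I expect to be the crux of the argument. For $u\in V\setminus U$ with $u\neq u_0$ we have $h(u)=f(u)$; if in addition $u\not\sim u_0$ then $Lh(u)=Lf(u)$, as none of its incident edge values moves. If $u\sim u_0$ and $Lh(u)>Lf(u)$, then the increase can come only from the edge $(u,u_0)$, so that $Lh(u) = w(u,u_0)\,|f(u)-h(u_0)| = lh(u_0,u) \le Lh(u_0) = \beta$, where I use the symmetry of $w$ together with $lh(u_0,u)\le\max_{v\sim u_0} lh(u_0,v)$. Hence for every $u\neq u_0$ and every threshold $t>\beta$ the relation $Lh(u)\ge t$ forces $Lf(u)\ge t$, the increasing neighbours being excluded because their $Lh$-value is at most $\beta<t$. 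Identifying the single edge responsible for any rise and capping it by $\beta$ is exactly what makes the estimate work.

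Finally I would convert this into the lexicographic comparison. Put $N_f(t) := |\{u\in V\setminus U : Lf(u)\ge t\}|$ and likewise $N_h(t)$. The previous step yields $N_h(t)\le N_f(t)$ for all $t>\beta$, with strict inequality for $t\in(\beta,\alpha]$, since $u_0$ then lies in the super-level set of $f$ but not of $h$. Using the identity $\Llex_i(\cdot) = \sup\{\,t : N_{(\cdot)}(t)\ge i\,\}$ for the nonincreasingly sorted vectors, the inequality $N_h\le N_f$ above $\beta$ gives $\Llex_i(h)\le\Llex_i(f)$ for every index $i$ with $\Llex_i(f)>\beta$, while the strict count at $t=\alpha$ forces $\Llex_n(h)<\alpha\le\Llex_n(f)$ at the index $n := N_f(\alpha)$. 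Consequently, at the first index where the two sorted vectors differ the entry of $h$ is strictly smaller, that is, $\Llex(h)<\Llex(f)$. The bookkeeping in this last translation is elementary, so that the whole difficulty is concentrated in the neighbour control of the third paragraph.
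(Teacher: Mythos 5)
Your proposal is correct and follows essentially the same route as the paper's proof: strict decrease of the local Lipschitz value at the updated vertex (via convexity of the midrange objective $\phi$ and uniqueness of its minimizer), followed by the key observation that any neighbor whose value increases is capped by the new value at the updated vertex, which is exactly the paper's estimate $Lf^{(k+1)}(v)\le\max\bigl\{Lf^{(k)}(v),\,Lf^{(k+1)}(u)\bigr\}$. The only difference is presentational: where the paper concludes the $\Llex$ decrease by a brief case distinction on whether $Lf^{(k)}(v)\ge Lf^{(k)}(u)$, you spell out the same bookkeeping via super-level-set counting functions $N_f(t)$, $N_h(t)$, which makes explicit (and correct) what the paper leaves terse.
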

\begin{proof}
	Assume $f^{(k+1)}\neq f^{(k)}$ and let $u$ denote the updated vertex in step $k$.
	Then, it holds
	\begin{align}
	Lf^{(k+1)}(u) 
	&= \max_{v \sim u \atop v \not = u} w(u,v)| f^{(k+1)}(u) - f^{(k)}(v) | 
	\leq
	\max_{v \sim u} w(u,v)| f^{(k+1)}(u) - f^{(k)}(v) |\\
	&\leq 
	(1-\tau) Lf^{(k)}(u) + \tau \max_{v \sim u} w(u,v) |  \midr_{w(u)} x^{(k)}(u) - f^{(k)}(v) | \\
	&=
	(1-\tau) Lf^{(k)}(u) + \tau \max_{v \sim u} w(u,v) \big| \argmin_a (\max_{\tilde v \sim u} w(u,\tilde v) |a - f^{(k)}(\tilde v)|)  - f^{(k)}(v) \big|\\
	&< Lf^{(k)}(u),
	\end{align}
	where the last inequality follows by $f^{(k+1)} \neq f^{(k)}$.
	For $v \sim u$, $v \not = u$, we obtain by
	\begin{align}
	Lf^{(k+1)}(v) 
	&= \max\big\{ \max_{\tilde v \sim v \atop \tilde v \not = u} w(v,\tilde v)| f^{(k)}(v) - f^{(k)}(\tilde v) | ,w(v,u)| f^{(k)}(v) - f^{(k+1)}(u) | \big\}
	\end{align}
	that 
	$L f^{(k+1)}(v) \le \max \{Lf^{(k)}(v), L f^{(k+1)}(u)\}$. 
	Hence, for all $v\in V\setminus U$ with $Lf^{(k)}(v) \geq Lf^{(k)}(u)$ 
	it holds 
	$L f^{(k+1)}(v) \leq Lf^{(k)}(v)$ and if $Lf^{(k)}(v) < Lf^{(k)}(u)$ 
	it holds $L f^{(k+1)}(v) < Lf^{(k)}(u)$.
	Since $Lf^{(k+1)}(u)<Lf^{(k)}(u)$, this implies that $f^{(k+1)}$ is $\Llex$ smaller than $f^{(k)}$.
\end{proof}

Obviously, the previous proposition implies that the sequence of iterates stays bounded.

\section{Numerical Examples} \label{sec:numerics}
%
In this section, proof-of-the-concept examples for the performance of the proposed approaches are provided.
To outline the differences between the approximation of the minimal Lipschitz extension by Polya's method, iterated midrange filters and componentwise minimal Lipschitz extensions, we start with an intuitive example, where the function is defined on a simple graph and maps into $\mathbb R^2$, i.e., $m=2$.
Then, we consider the inpainting of RGB images, i.e.~of vector-valued functions with $m=3$.
An original image $f \in [0,1]^{M,N}$ has missing pixels, 
so that the function/image values are only known on a subset $U \subset \{1,\dots,M\} \times \{1,\dots,N\}$.
The aim is to reconstruct the complete image.
In the first few examples, we assume that the function lives on a 4-neighborhood graph of the image grid.
Of course, minimal Lipschitz extensions for inpainting tasks make more sense if the function values on neighboring vertices of the graph are similar.
This can be achieved by applying nonlocal patch-based techniques.
Starting with the pioneering work of \cite{BCM05} such methods were successfully refined and applied for different tasks in image processing.
Concerning Polya's method, we apply the method described in Section~\ref{sec:Newton} with $p=2,5,10,15,20$ and then further increase $p$ in steps of 10 until the desired value of $p$ is reached.
In particular, we initialize with the with the result obtained from the previous $p$ value.
As initialization for computing the iterated midrange filter the $2$-Laplacian is used.
All algorithms are implemented in Matlab.

\paragraph{2D function.}
The first example in Fig.~\ref{fig:Grid} shows a simple graph $G=(V,E)$ with equal weights, where the blue lines indicate the edges $E$ between vertices $V$.
The blue circles illustrate the fixed values of the function $g\colon U \rightarrow \mathbb R^2$ as spatial positions in $\mathbb R^2$.
The red crosses are the inpainted values of $f \in \Knoten_g(V)$.
For this simple example, the minimal Lipschitz extension can be computed analytically and coincides with the result of the iterated midrange filter \eqref{mid_iter} with $\tau =0.95$.
However, no convergence result is available in the vector-valued case so far and the result is only experimental.
Note that for computing the individual midrange filters we applied the simple procedure described in \cite[Section~4]{Phan2016}.
For larger size problems more sophisticated methods as described in Subsection~\ref{subsec:midr} should be used.
For Polya's algorithm we observe that if $p$  increases, the solution of the $p$-Laplacian indeed converges to the minimal Lipschitz extension.
Finally, we computed the  result of the componentwise minimal Lipschitz extension by \eqref{inf_lap_iter}, see Corollary~\ref{cor:conv_inf_lap}.
The function differs completely from the minimal Lipschitz extension of the vector-valued function.
This is not really surprising: Consider for example a graph consisting of four vertices, where one vertex is connected to the remaining three vertices by edges with equal weights. 
Let the function values on those three vertices be $(0,0), (1,0), (1/2,\sqrt{3}/2)$ (equilateral triangle), the minimal Lipschitz extension is $(1/2,\sqrt{3}/6)$ (the circumcenter of the triangle),
while the componentwise minimal Lipschitz extensions yields the point $(1/2,\sqrt{3}/4)$.
\begin{figure}[t]
	\centering
	\parbox{\figrasterwd}{
		\centering
		\parbox{.3\figrasterwd}{%
			\subfloat[][2-Laplacian]{\includegraphics[width=\hsize]{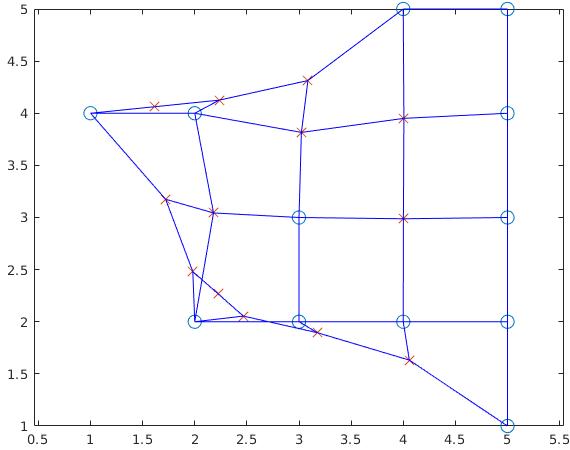}}
			
			\subfloat[][2400-Laplacian]{\includegraphics[width=\hsize]{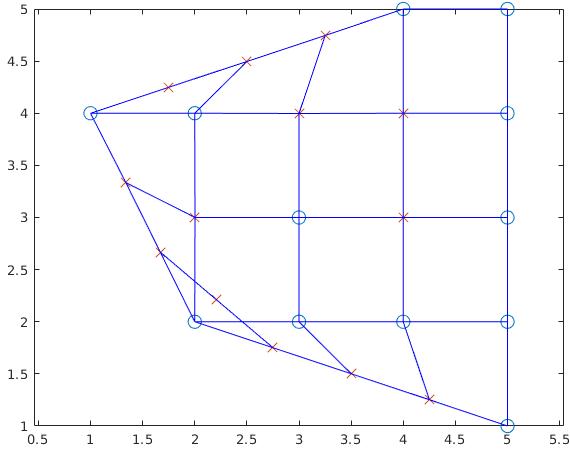}}
		}
		\hskip .5em
		\parbox{.3\figrasterwd}{%
			\subfloat[][50-Laplacian]{\includegraphics[width=\hsize]{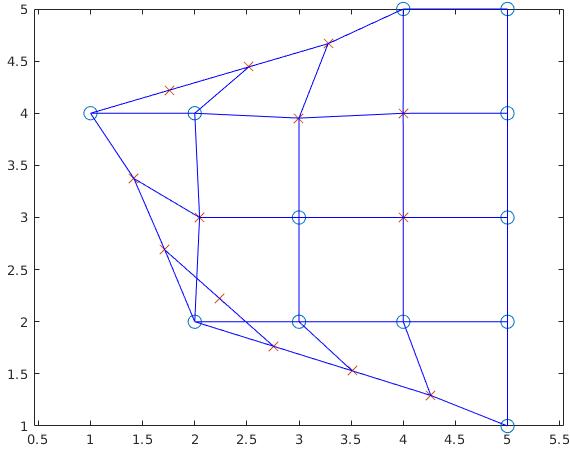}}
			
			\subfloat[][Iterated midrange filter]{\includegraphics[width=\hsize]{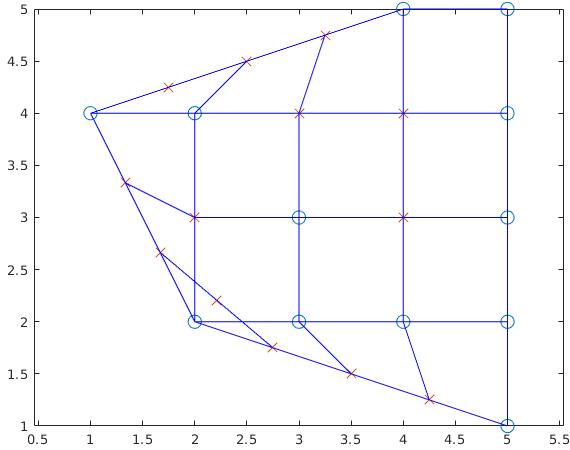}}
		}
		\hskip .5em
		\parbox{.3\figrasterwd}{%
			\subfloat[][200-Laplacian]{\includegraphics[width=\hsize]{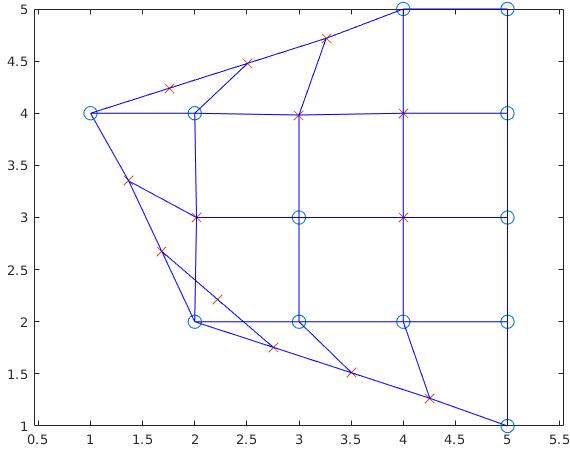}}
			
			\subfloat[][Comp.~minimal Lipschitz]{\includegraphics[width=\hsize]{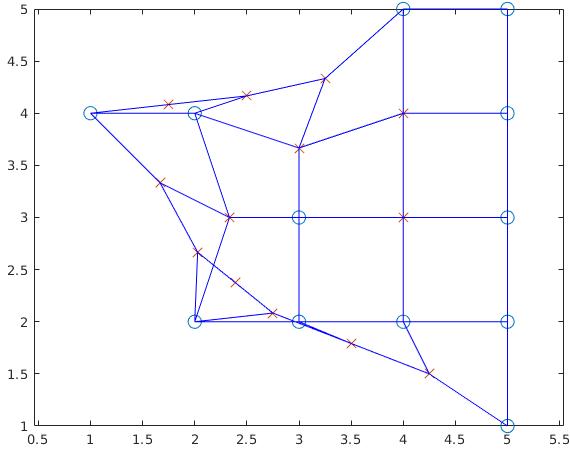}}
		}
	}		
	\caption{Comparison of different extensions of a 2D function with given blue circle values.
		The solution of $p$-Laplacians for large $p$ as well as the iterated midrange filter approximate the minimal Lipschitz extension well. The componentwise, single-valued minimal Lipschitz extensions differ completely from those extensions.}
	\label{fig:Grid}
\end{figure}

\paragraph{RGB inpainting on a local neighborhood graph.}
Next, we consider a simple RGB image with a missing square in the center, where the four color values form a square in the RGB cube.
We use an equally weighted 4-neighborhood graph on the image grid.
Fig.~\ref{fig:RGBSquare} shows the inpainting results with the same methods as in the previous example.
Again the solution of the $p$-Laplacian for large $p$ and the iterated midrange filter lead to nearly the same images which differ from the comp.~minimal Lipschitz solution.
At first glance the results might look a bit unexpected since the 2-Laplacian appears to be smoother than the others.
In order to check that the values of the Lipschitz constants in $\Llex$ get indeed lexicographically smaller, we added a table with the 10 largest Lipschitz constants for increasing $p$ in Fig.~\ref{fig:Lipschitz}.
This table also contains the values for the comp.~minimal Lipschitz extension in the last column, which are already worse than the values of the $40$-Laplacian.
\begin{figure}
	\centering
	\parbox{\figrasterwd}{
		\centering
		\parbox{.3\figrasterwd}{%
			\subfloat[][2-Laplacian]{\includegraphics[width=\hsize]{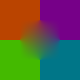}}
			
			\subfloat[][1700-Laplacian]{\includegraphics[width=\hsize]{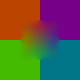}}
		}
		\hskip .5em
		\parbox{.3\figrasterwd}{%
			\subfloat[][10-Laplacian]{\includegraphics[width=\hsize]{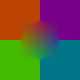}}
			
			\subfloat[][Iterated midrange filter]{\includegraphics[width=\hsize]{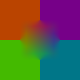}}
		}
		\hskip .5em
		\parbox{.3\figrasterwd}{%
			\subfloat[][50-Laplacian]{\includegraphics[width=\hsize]{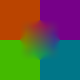}}
			
			\subfloat[][Comp.~minimal Lipschitz]{\includegraphics[width=\hsize]{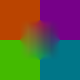}}
		}
	}	
	\caption{Comparison of different extensions of an RGB image considered as a 3D function on a 4-neigborhood grid graph.}
	\label{fig:RGBSquare}
\end{figure}
\begin{figure}
	\begin{center}
		\includegraphics[width=.98\textwidth]{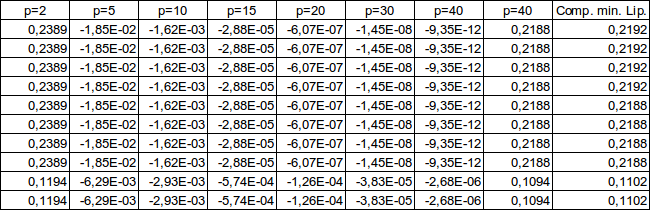}	
		\caption{Values of the 10 largest Lipschitz constants   in $\Llex$ for $p=2,40$, 
			and their changes to the previous ones for $p=5,10, \ldots,40$.}
		\label{fig:Lipschitz}		
	\end{center}
\end{figure}

\paragraph{Nonlocal image inpainting (random mask).}
In our next two examples, 90\% of the image pixels are missing, where the pixels are chosen randomly.
We present inpainting results for a more sophisticated graph choice
based on nonlocal patch similarities.
To this end, we built a graph $G$ connecting the image grid points in a semi nonlocal way.
Given some patch radius $r$, the local patch $p_{ij}$ around some pixel $(i,j)$ is defined as the quadratic part of $I$ 
with size $2r+1 \times 2r+1$ which is centered at the pixel $(i,j)$.
Then, the distance between two grid points $(i,j)$ and $(j,k)$ is defined as
\[
d\bigl((i,j),(k,l)\bigr)^2 \coloneqq \|p_{ij} - p_{kl}\|_F^2 + c \cdot \left(\frac{i-k}{m}\right)^2 + c \cdot \left(\frac{j-l}{n}\right)^2,
\]
where $\|\cdot \|_F$ denotes the Frobenius norm.
In order to reduce the computational effort, the distances are only computed in a neighborhood of radius $R$ around $(i,j)$.
The edge weights are defined as
\[w\bigl((i,j),(k,l)\bigr) \coloneqq \exp \left( - d\bigl((i,j),(k,l)\bigr)^2 \bigl( \frac{1}{\sigma(i,j)^2} + \frac{1}{\sigma(k,l)^2} \bigr)\right),\]
where $\sigma(i,j)$ is the distance of $(i,j)$ to its 20th nearest neighbor.
Note that the sum of $\sigma(i,j)$ and $\sigma(k,l)$ ensures symmetry of the adjacency matrix of $G$.
The weights are truncated to the $K$ largest ones in order to make the adjacency matrix sparse.
In our numerical experiments it turned out to be beneficial to add a small local 4-neighbor grid graph to $G$ in the first few iterations.

At the beginning, the missing pixels are assigned random Gaussian numbers with mean and covariance of the known part of the image as proposed, e.g., in~\cite{ShOsZh16}.
Then, the nonlocal graph is generated and the $200$-Laplacian extension is computed as an approximation of the minimal Lipschitz extension.
This step, including the grid generation, is repeated 15 times and the corresponding results are shown for two different images 
in Figs.~\ref{fig:peppers} and \ref{fig:simpsons}, where
we used $c=9$, $r=5$, $R=30$ and $K=40$.
We observe that higher order Laplacians perform much better than the 2-Laplacian.
\begin{figure}[t]
	\centering
	\parbox{\figrasterwd}{
		\centering
		\parbox{.38\figrasterwd}{%
			\subfloat[][Original image]{\includegraphics[width=\hsize]{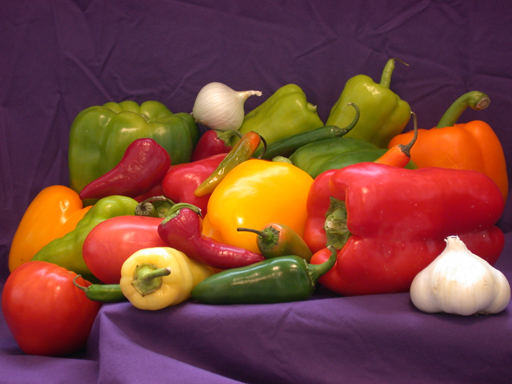}}
			
			\subfloat[][2-Laplacian (PSNR 26.14)]{\includegraphics[width=\hsize]{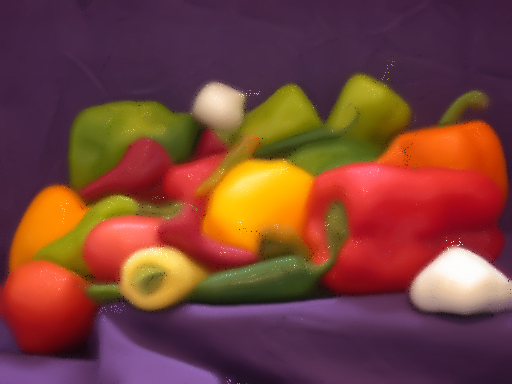}}
		}
		\hskip .5em
		\parbox{.38\figrasterwd}{%
			\subfloat[][10\% random samples]{\includegraphics[width=\hsize]{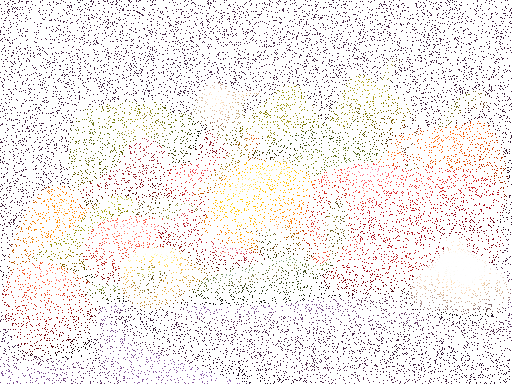}}
			
			\subfloat[][200-Laplacian (PSNR 28.87)]{\includegraphics[width=\hsize]{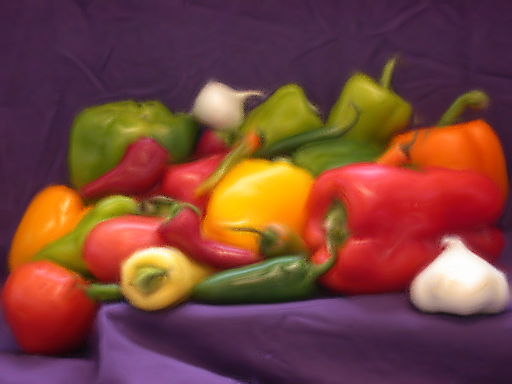}}
		}
	}	
	\caption{Nonlocal inpainting using different extensions.
		The $200$-Laplacians produces  considerably better results than the 2-Laplacians.}
	\label{fig:peppers}
\end{figure}
\begin{figure}[t]
	\centering
	\parbox{\figrasterwd}{
		\centering
		\parbox{.38\figrasterwd}{%
			\subfloat[][Original image]{\includegraphics[width=\hsize]{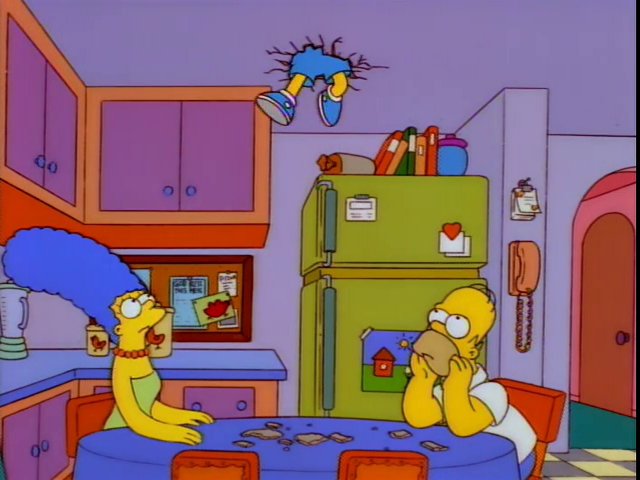}}
			
			\subfloat[][2-Laplacian (PSNR 26.14)]{\includegraphics[width=\hsize]{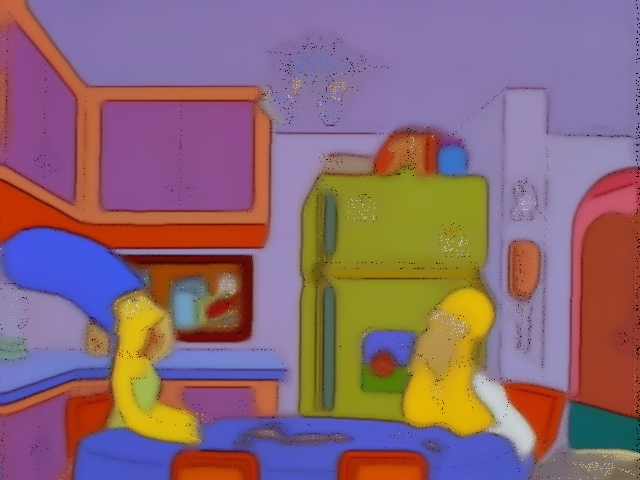}}
		}
		\hskip .5em
		\parbox{.38\figrasterwd}{%
			\subfloat[][10\% random samples]{\includegraphics[width=\hsize]{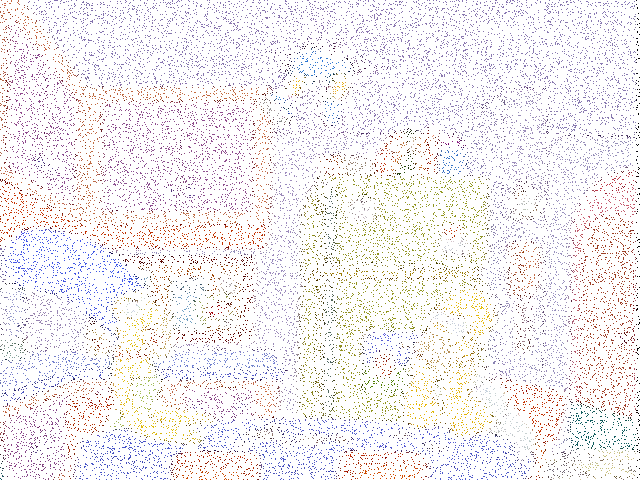}}
			
			\subfloat[][200-Laplacian (PSNR 28.87)]{\includegraphics[width=\hsize]{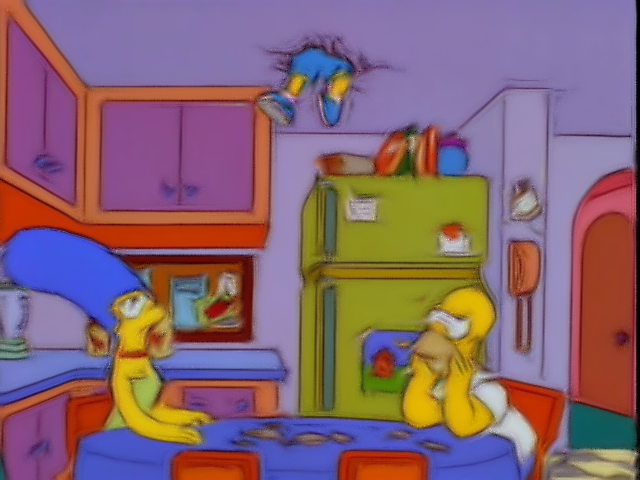}}
		}
	}	
	\caption{Nonlocal inpainting using different extensions.
		(Simpsons image:\href{https://frinkiac.com/img/S08E02/261944.jpg}{https://frinkiac.com/img/S08E02/261944.jpg}).}
	\label{fig:simpsons}
\end{figure}

\paragraph{Nonlocal image inpainting (hole mask).}
In our final example, we apply an inpainting mask with larger holes.
Here, the above approach is modified such that only pixels are compared which are known in both patches.
If less than 10 percent of the pixels in the patches are known, the distance is defined as infinity.
Further, the local part of the distance is omitted and $\sigma$ is chosen as a fixed constant independent of $i$ and $j$.
The algorithm uses the following initialization step:
Iteratively, only pixels with at least one known neighbor in the grid graph are chosen for the inpainting and marked as known pixels afterwards.
This step is repeated until every pixel of the mask is filled, i.e.~every pixel is treated exactly once.
After this initialization, the same iterative procedure as for the random masks is applied and the result for $p=200$ is shown in Fig.~\ref{fig:JoDad}. 
The other parameters are $\sigma = 0.045$, $r=7$, $R=45$, and $K=45$.
Again, we observe quality differences between the 2-Laplacian and 200-Laplacian, especially in the zoomed parts of the image.

\section*{Funding}
This work was supported by the German Research Foundation (DFG)  with\-in the Research Training Group 1932, project area P3.
\begin{figure}[t]
	\centering
	\parbox{\figrasterwd}{
		\centering
		\parbox{.3\figrasterwd}{%
			\subfloat[][Original image]{\includegraphics[width=\hsize]{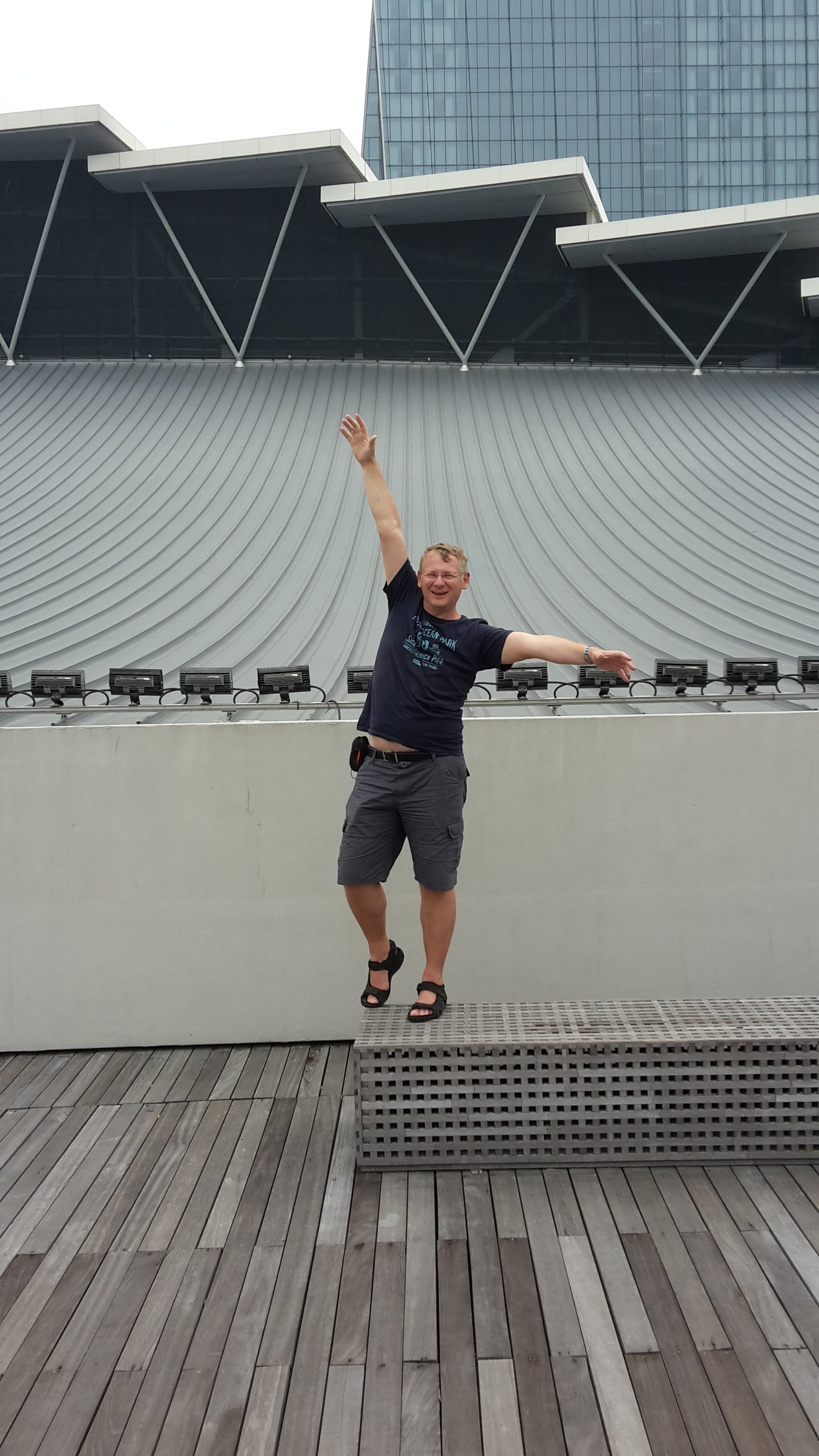}}
			
			\subfloat[][2-Laplacian]{\includegraphics[width=\hsize]{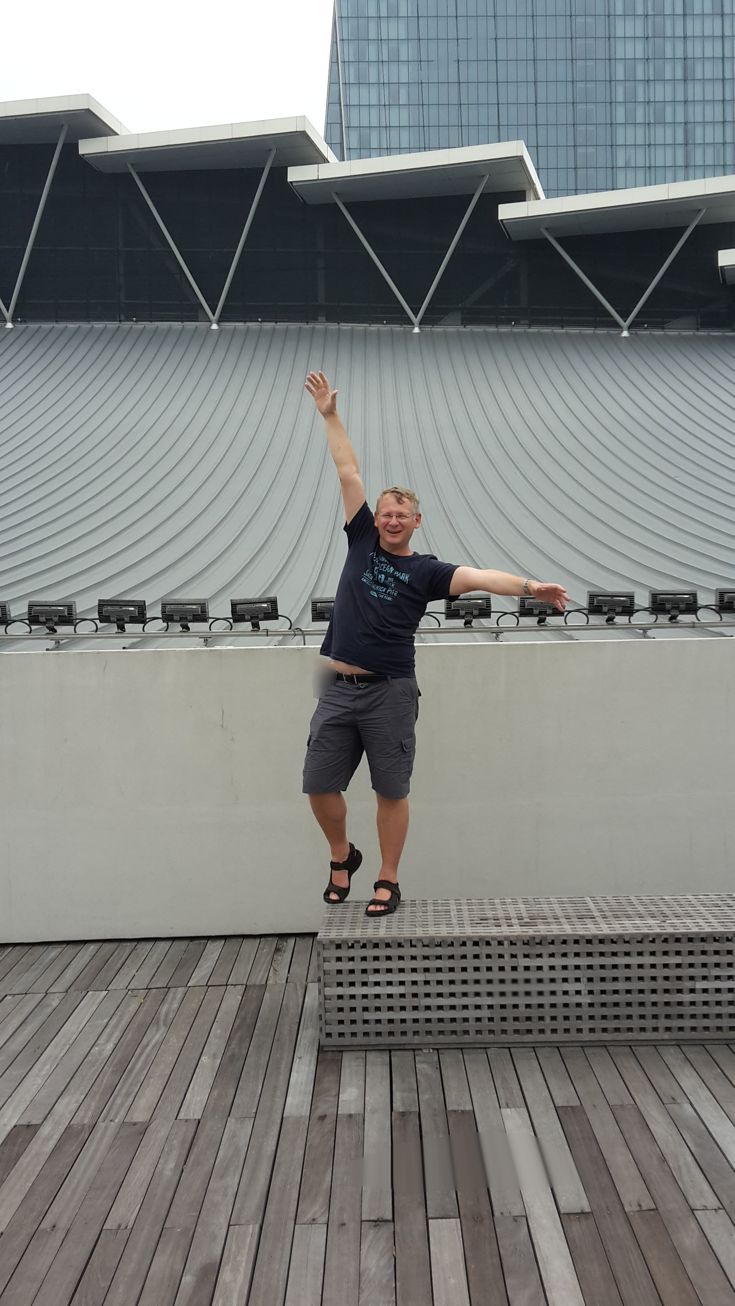}}
		}
		\hskip .5em
		\parbox{.3\figrasterwd}{%
			\subfloat[][Masked image]{\includegraphics[width=\hsize]{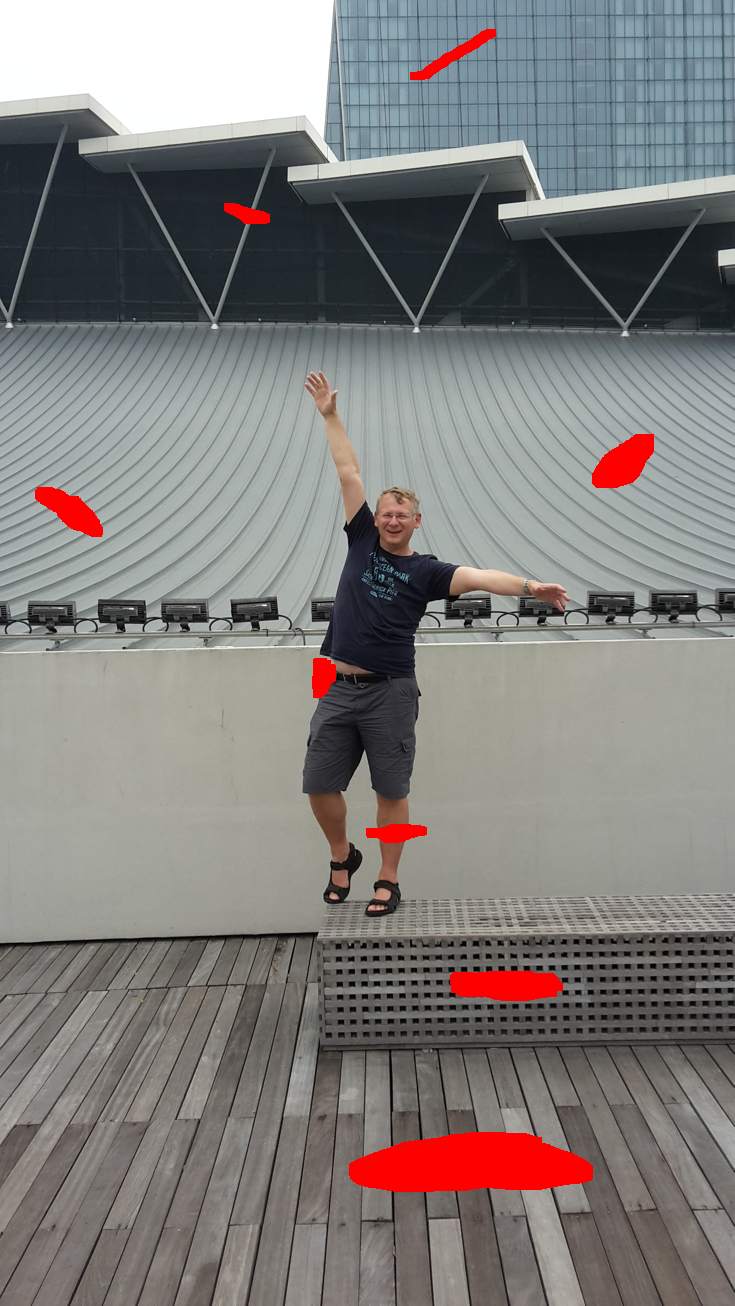}}
			
			\subfloat[][200-Laplacian]{\includegraphics[width=\hsize]{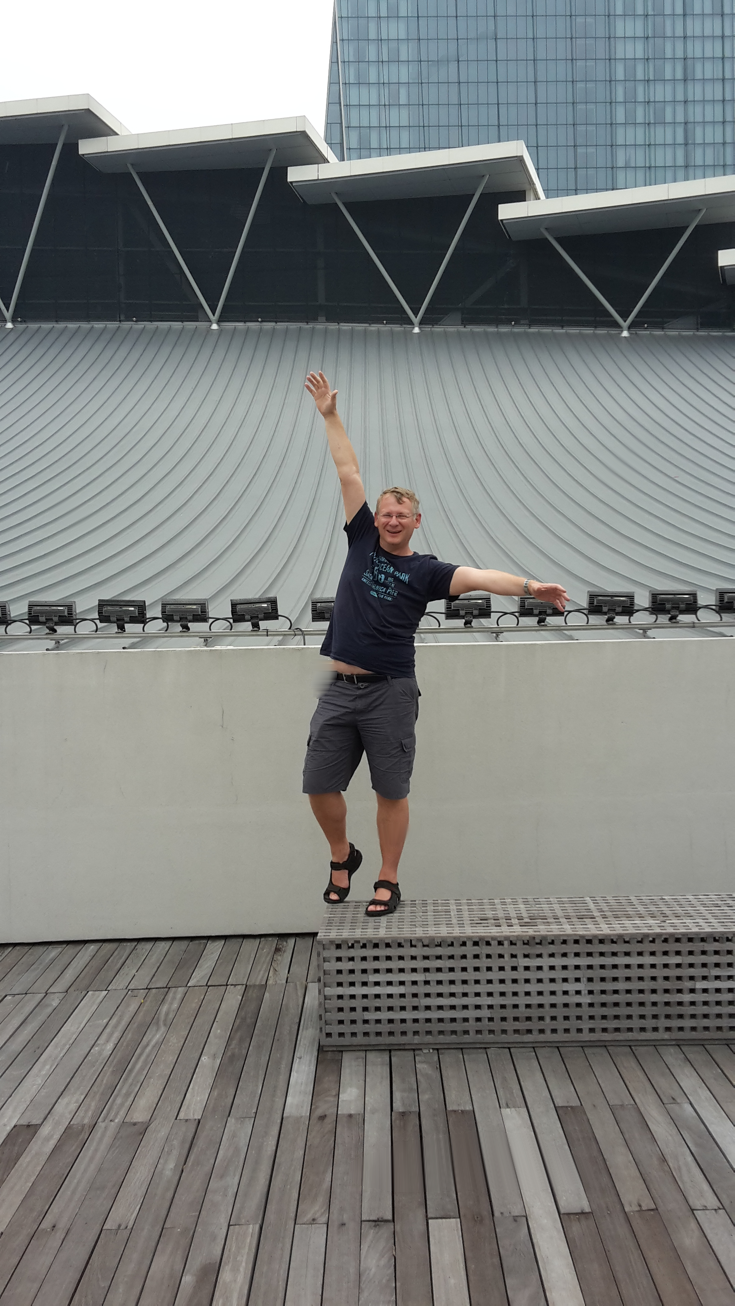}}
		}
		\hskip.5em
		\parbox{.3\figrasterwd}{%
			\subfloat[][Floor 200-Laplacian]{\includegraphics[width=\hsize]{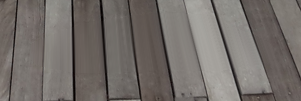}}
			
			\subfloat[][Floor 2-Laplacian]{\includegraphics[width=\hsize]{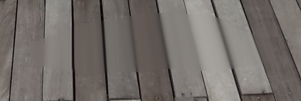}}
			\vskip3em
			\subfloat[][Box 200-Laplacian]{\includegraphics[width=\hsize]{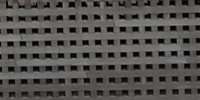}}
			
			\subfloat[][Box 2-Laplacian]{\includegraphics[width=\hsize]{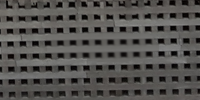}}
		}
	}
	\caption{Non-local inpainting using different extensions.}
	\label{fig:JoDad}
\end{figure}

\bibliographystyle{abbrv}
\bibliography{database}	
\end{document}